\documentclass[11pt]{amsart}
\usepackage[utf8]{inputenc}
\usepackage{amsmath,amssymb,amsthm}
\usepackage[dvipsnames]{xcolor}
\usepackage{graphicx}
\usepackage[font=small,labelfont=sc,labelsep=period,justification=justified]{caption}
\usepackage{array,booktabs}
\usepackage[margin=2.6cm]{geometry}
\usepackage{enumitem}
\usepackage[hidelinks]{hyperref}

\newtheorem{theorem}{Theorem}[section]
\newtheorem{proposition}[theorem]{Proposition}
\newtheorem{lemma}[theorem]{Lemma}

\theoremstyle{definition}
\newtheorem{definition}[theorem]{Definition}
\newtheorem{example}[theorem]{Example}
\theoremstyle{remark}
\newtheorem{remark}[theorem]{Remark}

\newcommand{\lam}[2]{\lambda^{#1}_{#2}}
\newcommand{\R}{\mathbb{R}}
\newcommand{\T}{\mathcal{T}}
\newcommand{\Sp}{\mathcal{S}}
\newcommand{\Ex}{\mathcal{E}}

\begin{document}

\vspace{1em}

\title[Continuous envy-free allocation via reciprocal preference matrices]{Continuous envy-free allocation via reciprocal preference matrices: an alternative to Sperner-based discretization}

\author{Luis \'Angel Calvo}
\address{Luis \'Angel Calvo Pascual, Department of Quantitative Methods, ICADE and Institute for Research in Technology (IIT), ICAI,
Comillas Pontifical University, Spain.}
\email{lacalvo@comillas.edu}
\author{Susana Merch\'an}
\address{Susana Merch\'an Rubira, Department of Applied Mathematics and Computer Science for Civil and Naval Engineering, Universidad Polit\'ecnica de Madrid, Spain.}
\email{susana.merchan@upm.es}
\author{Diego de Pereda}
\address{Diego de Pereda Sebasti\'an, Department of Mathematics and Physics, Universidad Nebrija, Spain.}
\email{dpereda@nebrija.es}
\author{Javier Rodrigo}
\address{Javier Rodrigo Hitos, Department of Applied Mathematics, ICAI, Comillas Pontifical University, Spain.}
\email{jrodrigo@comillas.edu}

\begin{abstract}
Fair allocation problems on a continuous simplex are often approached through Sperner-type discretizations: agents label mesh vertices according to their preferences and a fully labelled elementary triangle gives a discrete localization of a fair outcome. We develop a continuous alternative for three agents and three alternatives by representing preferences with multiplicatively consistent reciprocal pairwise-comparison matrices. Two independent ratios per agent induce exact linear indifference boundaries and turn envy-free allocation into a finite-dimensional feasibility problem. For the six possible assignments, we introduce an assignment score $\Phi$ built from the agents' consistent priority vectors and prove that an incentive solution component is nonempty if and only if its assignment is the unique minimizer of $\Phi$, whereas a disincentive solution component is nonempty if and only if its assignment is the unique maximizer. Consequently, the strict solution region is either empty or a unique convex preference region whose closure lies in the interior of the simplex. The weak solution set is never empty and degenerates exactly when $\Phi$ has ties. We also compare the exact continuous formulation with Sperner-based discretization, including the boundary convention required in the disincentive model. Under exact consistency, the model uses six independent pairwise thresholds, whereas a regular triangular mesh with resolution $m$ contains $(m+1)(m+2)/2$ vertices. The method therefore exchanges a stronger parametric preference assumption for an exact continuous solution set and a resolution-independent elicitation stage.
\end{abstract}

\keywords{fair division; envy-free allocation; reciprocal pairwise-comparison matrices; preference elicitation; Sperner lemma; continuous optimization}
\subjclass[2020]{91B32, 91B06, 91B08}

\maketitle

\section{Introduction}

Fair allocation problems combine two tasks that are often separated in mathematical models: the first is to describe how agents compare feasible alternatives, and the second is to identify an allocation at which these preferences are simultaneously satisfied. In the three-agent setting studied here, feasible allocations are represented by the standard simplex
\[
T=\{(a,b,c)\in\R^3_{\ge 0}: a+b+c=1\},
\]
where the three coordinates are associated with alternatives $A$, $B$, and $C$. The same geometry covers two practically different interpretations. In the incentive model, the coordinates represent benefits, so larger values are preferred; salaries attached to three jobs are a simple example. In the disincentive model, the coordinates represent costs or burdens, so smaller values are preferred; rental harmony, where the coordinates are room prices, is the canonical example. In both cases the three alternatives are assigned to the three agents, and an allocation together with an assignment is \emph{weakly envy-free} if no agent strictly prefers the alternative assigned to another agent, and \emph{strictly envy-free} if every agent strictly prefers her own alternative. Fair division has a long mathematical tradition, going back to Steinhaus~\cite{Steinhaus}, Dubins and Spanier~\cite{DubinsSpanier} and Stromquist~\cite{Stromquist}, and modern work increasingly emphasizes not only existence but also the information and computational steps needed to implement a fair rule~\cite{AzizMackenzie,LiuLuSuzukiWalsh,Moulin,RobertsonWebb}.

Sperner's lemma \cite{Sperner} provides one of the most elegant routes from local preferences to a fair allocation. In Su's rental-harmony construction~\cite{Su}, the simplex is triangulated, that is, subdivided into finitely many small triangles, called \emph{elementary triangles}, any two of which are disjoint or meet in a common vertex or a common edge; mesh vertices are assigned to agents, and each owned vertex is labelled by a preferred alternative. A Sperner boundary condition then guarantees at least one \emph{fully labelled} elementary triangle, that is, an elementary triangle whose three vertices carry the three different labels $A$, $B$ and $C$ (the precise boundary condition is stated in Section~\ref{sec:sperner}). This gives a finite combinatorial localization of a fair outcome and connects fair division with the wider simplicial and fixed-point tradition~\cite{DeLoeraPetersonSu,Meunier,Scarf,Todd}. The practical issue considered in this paper arises when that existence argument is used as a numerical decision procedure. A mesh resolution must be selected, preferences must be elicited at its vertices, and the discrete output is a fully labelled triangle rather than the exact continuous set of allocations at which the agents' preferences are simultaneously compatible. Finer triangulations improve localization but require more queries, while a local refinement of a previously identified triangle does not automatically inherit the boundary conditions required for a new application of Sperner's lemma. These are not failures of Sperner's lemma; they are consequences of using a discrete topological certificate as a finite-resolution computational device.

We propose a different route, inspired by pairwise-comparison decision theory.

Each agent $i$ is represented by a positive reciprocal matrix whose entries encode relative preference thresholds: the entry $\lam{i}{AB}$, for instance, is the ratio $a/b$ at which agent $i$ is indifferent between $A$ and $B$. Under multiplicative consistency there exists a positive \emph{priority vector} $w^i=(w^i_A,w^i_B,w^i_C)$ such that
\[
\lam{i}{AB}=\frac{w^i_A}{w^i_B},\qquad \lam{i}{AC}=\frac{w^i_A}{w^i_C},\qquad \lam{i}{BC}=\frac{w^i_B}{w^i_C},
\]
and therefore only two of the three thresholds are independent. For an allocation $(a,b,c)\in T$ we use the normalized scores $s^i_A=a/w^i_A$, $s^i_B=b/w^i_B$, $s^i_C=c/w^i_C$. In the incentive model agent $i$ selects the largest score; in the disincentive model, the smallest. In both models the pairwise indifference boundaries are the same three lines $a=\lam{i}{AB}b$, $a=\lam{i}{AC}c$, $b=\lam{i}{BC}c$, although, as shown in Section~\ref{sec:model}, the two models use different halves of these lines to form the preference partition. Thus the reciprocal matrix is not used merely to rank alternatives: its ratios generate an exact continuous partition of the allocation simplex.

For a bijection $\pi$ from the agents to $\{A,B,C\}$, let $\Sp^{+}_{\pi}$ (respectively $\Sp^{-}_{\pi}$) be the set of allocations at which every agent strictly prefers the alternative assigned by $\pi$ in the incentive (respectively disincentive) model, and let $\Ex^{\pm}_{\pi}$ be its weak counterpart. These are exactly the strictly and weakly envy-free allocations supporting $\pi$. Each of them is the intersection of $T$ with six linear half-planes, so once the thresholds have been elicited the solution region is described exactly and no triangulation parameter is needed. With the \emph{assignment score} $\Phi(\pi)=\prod_{r=1}^3 w^r_{\pi(r)}$, our main results can be summarized as follows.

\begin{enumerate}[label=(\roman*),leftmargin=2em]
\item $\Sp^{+}_{\pi}\neq\emptyset$ if and only if $\pi$ is the unique minimizer of $\Phi$, and $\Sp^{-}_{\pi}\neq\emptyset$ if and only if $\pi$ is the unique maximizer; hence the strict solution region is either empty or a single convex region whose closure lies in the interior of $T$ (Theorem~\ref{thm:main}).
\item $\Ex^{+}_{\pi}\neq\emptyset$ if and only if $\pi$ minimizes $\Phi$ and $\Ex^{-}_{\pi}\neq\emptyset$ if and only if $\pi$ maximizes $\Phi$. The extended solution set is therefore never empty, and it degenerates to a segment or a point exactly when the optimum of $\Phi$ is attained by several assignments (Proposition~\ref{prop:weak}).
\item Fully labelled triangles of Sperner discretizations converge to the extended solution set as the mesh is refined (Proposition \ref{prop:liminf}), but at a fixed resolution they need not meet the exact solution region (Examples~\ref{ex:inc} and~\ref{ex:dis}).
\end{enumerate}

The contribution of this work lies at the intersection of fair division, decision theory, and continuous optimization. We use reciprocal matrices as a compact preference-elicitation device, reinterpret their ratios as indifference thresholds on a simplex, derive an exact existence and uniqueness theory for the resulting fair-allocation region and compare the continuous construction with a classical simplicial benchmark. The scope is deliberately restricted to three agents, three alternatives, multiplicative consistency, and linear indifference boundaries. These assumptions are what make the exact characterization possible; inconsistent matrices, curved indifference boundaries and more than three agents are natural extensions rather than features silently absorbed by the present model. In particular, linear indifference boundaries encode the assumption that each agent's valuation of an alternative is proportional to its coordinate, and the comparison with Sperner-based procedures in Section~\ref{sec:sperner} must be read in the light of this stronger assumption.

The paper is organized as follows. Section~\ref{sec:lit} reviews the related fair-division, simplicial, pairwise-comparison and rent-division literature. Section~\ref{sec:model} introduces the model and the geometric definitions. Section~\ref{sec:main} contains the exact feasibility, assignment-score, interiority and uniqueness results. Section~\ref{sec:degenerate} characterizes the degenerate configurations. Section~\ref{sec:sperner} compares the continuous construction with Sperner-based discretization, including boundary labellings, convergence, two worked examples, local refinement and elicitation counts. Section~\ref{sec:concl} states the conclusions and future lines.

\section{Related literature}\label{sec:lit}

\subsection{Fair division, Sperner labellings, and simplicial methods}
The present model combines two literatures that are usually developed separately. The first is the theory of fair division. Early formulations of divisible-resource allocation go back to Steinhaus~\cite{Steinhaus}; continuous cake-cutting procedures and envy-free existence results were developed by Dubins and Spanier~\cite{DubinsSpanier} and Stromquist~\cite{Stromquist}; and constructive and algorithmic treatments were systematized in~\cite{BramsTaylor,RobertsonWebb}. More recent work has emphasized computational complexity, query models, and implementation issues~\cite{AzizMackenzie,LiuLuSuzukiWalsh,Moulin}. This last perspective is especially relevant here because our comparison with Sperner-based procedures concerns not only existence, but also how preference information enters the computation.

The second line is the use of simplicial and labelling arguments in equilibrium and allocation problems. Su's rental-harmony construction~\cite{Su} is the direct fair-division benchmark for this paper: a triangulated simplex and a Sperner labelling convert local room preferences into a fully labelled triangle. Related simplicial methods have a much broader optimization history. Scarf's approximation of fixed points~\cite{Scarf} and Todd's lecture notes on triangulation-based fixed-point algorithms~\cite{Todd} are classical examples. Edgewise subdivision provides a systematic way to refine simplices~\cite{EdelsbrunnerGrayson}; polytopal and combinatorial extensions of Sperner's lemma appear in~\cite{DeLoeraPetersonSu,Meunier}; and recent structural work continues to investigate the critical combinatorics of Sperner labellings~\cite{KaiserStehlikSkrekovski}. Within the optimization literature, van der Laan, Talman, and Yang~\cite{vanderLaanTalmanYang} use integer labellings for discrete nonlinear equations, while Le et al.~\cite{LeLeVanPhamSaglam} derive the Gale--Nikaido--Debreu lemma directly from Sperner's lemma. Gourdel et al.~\cite{GourdelLeVanPhamTranViet} further relate Gale--Nikaido--Debreu, Brouwer, Kakutani, and Hartman--Stampacchia type results. These connections motivate our use of Sperner not as an opponent to be disproved, but as the natural discrete benchmark against which an exact continuous formulation can be evaluated.

From a computational viewpoint, a triangulation introduces a resolution parameter. This is standard in simplicial algorithms and partition-based optimization: increasing resolution improves localization but also increases the number of cells and vertices that must be processed~\cite{EdelsbrunnerGrayson,GToth}. Our question is therefore specific. When preferences themselves admit a compact parametric representation, can one recover the entire fair-allocation region directly, rather than sampling the preference map on a successively finer mesh? The construction below answers this question for three agents with consistent reciprocal preferences.

\subsection{Pairwise comparisons and preference elicitation}
Pairwise-comparison matrices provide the decision-theoretic ingredient. Their modern use is closely associated with ratio-scale preference modelling and the Analytic Hierarchy Process~\cite{Saaty1977,Saaty1980,Saaty1986}. The broader decision-analysis literature has examined their interpretation, axiomatic properties, and practical use~\cite{Brunelli,FormanGass,Mazurek2023}. Consistency and inconsistency are central because pairwise judgments need not satisfy exact multiplicative transitivity; methods for analysing, repairing, or completing such matrices include~\cite{FedrizziBrunelliCaprila,FedrizziGiove,MazurekLinares}. Pairwise judgments have also been embedded in optimization-based elicitation models~\cite{SirajMikhailovKeane}, and empirical comparisons with more direct optimization formulations illustrate their role as a practical interface for decision support~\cite{AbelGalpinPatonKeane}.

We use only the part of this machinery that is needed for the continuous allocation model. In particular, we do not use reciprocal matrices to construct a final ranking through an eigenvector algorithm. We assume exact multiplicative consistency and exploit the induced ratio representation directly. This yields two independent thresholds per agent and, crucially, turns those thresholds into linear indifference boundaries on the allocation simplex. Inconsistent matrices are therefore outside the formal results below and are left for future work.

\section[Model formulation]{Model formulation}\label{sec:model}

\subsection{Allocation simplex and reciprocal threshold model}
There are three agents, indexed by $i\in\{1,2,3\}$, and three alternatives $A,B,C$. The feasible allocation space is the closed simplex
\begin{equation}\label{eq:T}
T=\{(a,b,c)\in\R^3_{\ge0}:a+b+c=1\}.
\end{equation}
The coordinates $a,b,c$ are the values attached to alternatives $A,B,C$, respectively.

For each agent $i$, preferences are represented by the positive reciprocal matrix
\begin{equation}\label{eq:M}
M_i=\begin{pmatrix}1&\lam{i}{AB}&\lam{i}{AC}\\[2pt] 1/\lam{i}{AB}&1&\lam{i}{BC}\\[2pt] 1/\lam{i}{AC}&1/\lam{i}{BC}&1\end{pmatrix},\qquad \lam{i}{XY}>0.
\end{equation}
The parameter $\lam{i}{XY}$ is the pairwise threshold of agent $i$ between the corresponding alternatives; operationally, $\lam{i}{AB}$ is the ratio $a/b$ at which agent $i$ is indifferent between $A$ and $B$, and similarly for the other two pairs. Reciprocity means that the reverse comparison contains no new information. Throughout the paper, $M_i$ is assumed to be multiplicatively consistent. Hence
\begin{equation}\label{eq:consistency}
\lam{i}{AC}=\lam{i}{AB}\lam{i}{BC},\quad\text{equivalently}\quad \lam{i}{BC}=\frac{\lam{i}{AC}}{\lam{i}{AB}}.
\end{equation}
Equivalently, there exists a positive vector $w^i=(w^i_A,w^i_B,w^i_C)$, unique up to multiplication by a positive constant, for which
\begin{equation}\label{eq:ratios}
\lam{i}{AB}=\frac{w^i_A}{w^i_B},\qquad\lam{i}{AC}=\frac{w^i_A}{w^i_C},\qquad\lam{i}{BC}=\frac{w^i_B}{w^i_C}.
\end{equation}
We call $w^i$ the \emph{priority vector} of agent $i$. Thus only two thresholds are independent for each agent. This ratio representation, rather than the spectral theory of reciprocal matrices, is the structural fact used throughout the remainder of the paper.

\subsection{Incentive and disincentive preference geometry}
The same reciprocal thresholds admit two monotonic interpretations of the coordinates of $T$.

\emph{Incentive model.} The coordinate attached to an alternative is a benefit: larger values make that alternative more attractive. For example, $A,B,C$ may be three job positions and $a,b,c$ their associated salaries or bonuses.

\emph{Disincentive model.} The coordinate attached to an alternative is a cost or burden: smaller values make that alternative more attractive. The rental-harmony problem is the standard example, with $A,B,C$ representing rooms and $a,b,c$ their rents.

For agent $i$, define the normalized scores
\begin{equation}\label{eq:scores}
s^i_A(a,b,c)=\frac{a}{w^i_A},\qquad s^i_B(a,b,c)=\frac{b}{w^i_B},\qquad s^i_C(a,b,c)=\frac{c}{w^i_C}.
\end{equation}
In the incentive model, the preferred alternative is the one with the largest normalized score; in the disincentive model, it is the one with the smallest normalized score. The priority vector is therefore not a vector of utilities. In the incentive model a larger $w^i_A$ makes $A$ \emph{less} attractive, because a larger benefit $a$ is needed for $A$ to be preferred; in the disincentive model a larger $w^i_A$ makes $A$ \emph{more} attractive, because a larger cost $a$ is accepted for it. For instance, if $\lam{i}{AB}=2$, then in the incentive model agent $i$ prefers $A$ to $B$ only if $a>2b$, whereas in the disincentive model she prefers $A$ to $B$ as long as $a<2b$. Pairwise equality of the scores gives the common indifference boundaries
\begin{equation}\label{eq:lines}
a=\lam{i}{AB}b,\qquad a=\lam{i}{AC}c,\qquad b=\lam{i}{BC}c.
\end{equation}
By \eqref{eq:consistency}, the three lines are compatible and meet in a unique interior point.

The strict preference regions in the incentive model are
\begin{align*}
P^{+}_{i,A}&=\{(a,b,c)\in T: a>\lam{i}{AB}b,\ a>\lam{i}{AC}c\},\\
P^{+}_{i,B}&=\{(a,b,c)\in T: b>a/\lam{i}{AB},\ b>\lam{i}{BC}c\},\\
P^{+}_{i,C}&=\{(a,b,c)\in T: c>a/\lam{i}{AC},\ c>b/\lam{i}{BC}\}.
\end{align*}
For the disincentive model, the same boundaries apply with all inequalities reversed:
\begin{align*}
P^{-}_{i,A}&=\{(a,b,c)\in T: a<\lam{i}{AB}b,\ a<\lam{i}{AC}c\},\\
P^{-}_{i,B}&=\{(a,b,c)\in T: b<a/\lam{i}{AB},\ b<\lam{i}{BC}c\},\\
P^{-}_{i,C}&=\{(a,b,c)\in T: c<a/\lam{i}{AC},\ c<b/\lam{i}{BC}\}.
\end{align*}
Both models share the same three pairwise indifference lines, which are the cevians of $T$ through $P^i$ (Definitions~\ref{def:loci} and \ref{def:center} below). The incentive partition is formed by the three half-segments joining $P^i$ to the edges of $T$, whereas the disincentive partition is formed by the three half-segments joining $P^i$ to the vertices of $T$. Figure~\ref{fig:agent} illustrates both partitions.

\begin{figure}[ht]
\centering
\includegraphics[width=0.92\textwidth]{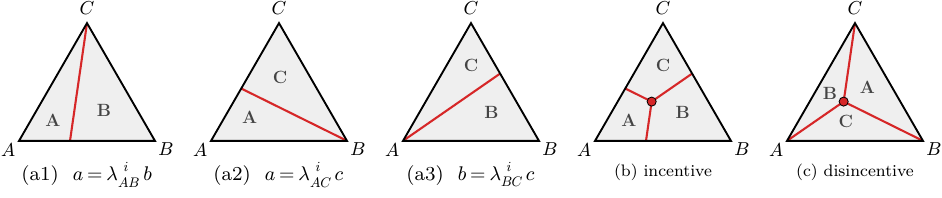}
\caption{Individual-agent preference geometry. Panels (a1)–(a3) show the three pairwise indifference segments induced by the reciprocal thresholds of agent $i$, with the sides labelled for the incentive model (the labels are exchanged in the disincentive model); panel (b) shows the preference partition obtained by superimposing them in the incentive model, and panel (c) the partition in the disincentive model.}
\label{fig:agent}
\end{figure}

\begin{definition}\label{def:loci}
For agent $i$, the three pairwise indifference segments are
\[
\{(a,b,c)\in T:a=\lam{i}{AB}b\},\qquad \{(a,b,c)\in T:a=\lam{i}{AC}c\},\qquad \{(a,b,c)\in T:b=\lam{i}{BC}c\}.
\]
\end{definition}
Each segment is a cevian of $T$.

\begin{definition}\label{def:center}
The \emph{indifference centre} $P^i$ is the unique point at which agent $i$ is simultaneously indifferent among $A,B,C$. If $D_i=\lam{i}{AB}+\lam{i}{AC}+\lam{i}{AB}\lam{i}{AC}$, then
\begin{equation}\label{eq:center}
P^i=\left(\frac{\lam{i}{AB}\lam{i}{AC}}{D_i},\frac{\lam{i}{AC}}{D_i},\frac{\lam{i}{AB}}{D_i}\right).
\end{equation}
Equivalently, $P^i$ is the normalization of $w^i$ to unit sum.
\end{definition}

\subsection{Preference regions and solution components}
Superimposing the three agent-specific partitions produces a finite collection of cells. At a point of $T$ where no agent is indifferent, the \emph{preference profile} is the triple $(X_1,X_2,X_3)$, where $X_i$ is the alternative strictly preferred ($A$, $B$ or $C$) by agent $i$. A \emph{preference region} is a maximal connected subset of $T$ on which the preference profile is constant. Because each preference profile is described by finitely many strict linear inequalities, every nonempty preference region is convex.

For example, in the incentive model, the region on which agent 1 prefers $A$, agent 2 prefers $B$, and agent 3 prefers $C$ is
\[
\left\{(a,b,c)\in T: a>\lam{1}{AB}b,\ a>\lam{1}{AC}c,\ b>a/\lam{2}{AB},\ b>\lam{2}{BC}c,\ c>a/\lam{3}{AC},\ c>b/\lam{3}{BC}\right\}.
\]
Figure~\ref{fig:partition} shows the superposition of three partitions in the incentive model.

\begin{figure}[ht]
\centering
\includegraphics[width=0.48\textwidth]{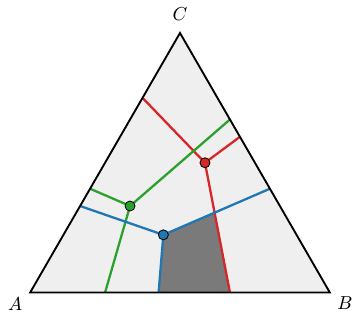}
\caption{A possible partition of $T$ induced by the preferences of the three agents in the incentive model. Red, green, and blue segments correspond to agents 1, 2, and 3, respectively. The shaded cell, where agent 1 prefers $A$ and agents 2 and 3 prefer $B$, is a boundary region.}
\label{fig:partition}
\end{figure}

\begin{definition}\label{def:boundary}
A preference region $R$ is a \emph{boundary region} if $\overline{R}\cap\partial T\neq\emptyset$.
\end{definition}
\begin{definition}\label{def:interior}
A preference region $R$ is an \emph{interior region} if $\overline{R}\subset\operatorname{int}(T)$.
\end{definition}

The fair allocations studied in this paper are those at which the three agents strictly prefer three different alternatives. For a permutation $(i,j,k)$ of $(1,2,3)$, the assignment $i\mapsto A$, $j\mapsto B$, $k\mapsto C$ therefore determines the following solution component.

\begin{definition}\label{def:S}
In the incentive model, define
\[
\Sp^{+}_{ijk}=\left\{(a,b,c)\in T: a>\lam{i}{AB}b,\ a>\lam{i}{AC}c,\ b>a/\lam{j}{AB},\ b>\lam{j}{BC}c,\ c>a/\lam{k}{AC},\ c>b/\lam{k}{BC}\right\}.
\]
In the disincentive model, define
\[
\Sp^{-}_{ijk}=\left\{(a,b,c)\in T: a<\lam{i}{AB}b,\ a<\lam{i}{AC}c,\ b<a/\lam{j}{AB},\ b<\lam{j}{BC}c,\ c<a/\lam{k}{AC},\ c<b/\lam{k}{BC}\right\}.
\]
The overall strict solution regions are
\[
\Sp^{+}=\bigcup_{(i,j,k)\in S_3}\Sp^{+}_{ijk},\qquad \Sp^{-}=\bigcup_{(i,j,k)\in S_3}\Sp^{-}_{ijk}.
\]
and we write $\mathcal S^\pm_\pi:=\mathcal S^\pm_{ijk}$ for the corresponding bijection $\pi$.
\end{definition}

The strict solution components exclude allocations at which one or more agents are indifferent. To include such cases, the strict inequalities are replaced by weak inequalities.

\begin{definition}\label{def:E}
For a permutation $(i,j,k)$ of $(1,2,3)$, define
\begin{align*}
\Ex^{+}_{ijk}&=\left\{(a,b,c)\in T: a\ge\lam{i}{AB}b,\ a\ge\lam{i}{AC}c,\ b\ge a/\lam{j}{AB},\ b\ge\lam{j}{BC}c,\ c\ge a/\lam{k}{AC},\ c\ge b/\lam{k}{BC}\right\},\\
\Ex^{-}_{ijk}&=\left\{(a,b,c)\in T: a\le\lam{i}{AB}b,\ a\le\lam{i}{AC}c,\ b\le a/\lam{j}{AB},\ b\le\lam{j}{BC}c,\ c\le a/\lam{k}{AC},\ c\le b/\lam{k}{BC}\right\},
\end{align*}
and write $\Ex^{\pm}_{\pi}:=\Ex^{\pm}_{ijk}$ for the corresponding bijection $\pi$. The overall extended solution sets are $\Ex^{+}=\bigcup_{(i,j,k)\in S_3}\Ex^{+}_{ijk}$ and $\Ex^{-}=\bigcup_{(i,j,k)\in S_3}\Ex^{-}_{ijk}$.
\end{definition}

\section{Exact characterization of the continuous solution region}\label{sec:main}

We now turn from model construction to the exact mathematical characterization of the strict and extended solution sets. The first step is to decide when a fixed assignment of agents to alternatives generates a nonempty solution component.

\subsection{Existence of a solution component}
The key simplification is to work with the ratios $b/a$ and $c/a$ rather than directly with barycentric coordinates.

\begin{proposition}\label{prop:exist}
Let $(i,j,k)$ be a permutation of $(1,2,3)$. In the incentive model, $\Sp^{+}_{ijk}\neq\emptyset$ if and only if
\begin{equation}\label{eq:inc5}
\lam{k}{AC}>\lam{i}{AC},\quad \lam{j}{AB}>\lam{i}{AB},\quad \lam{k}{BC}>\lam{j}{BC},\quad \lam{k}{AC}>\lam{i}{AB}\lam{j}{BC},\quad \lam{i}{AC}<\lam{j}{AB}\lam{k}{BC}.
\end{equation}
In the disincentive model, $\Sp^{-}_{ijk}\neq\emptyset$ if and only if
\begin{equation}\label{eq:dis5}
\lam{k}{AC}<\lam{i}{AC},\quad \lam{j}{AB}<\lam{i}{AB},\quad \lam{k}{BC}<\lam{j}{BC},\quad \lam{k}{AC}<\lam{i}{AB}\lam{j}{BC},\quad \lam{i}{AC}>\lam{j}{AB}\lam{k}{BC}.
\end{equation}
Consequently, the overall solution region is nonempty if and only if at least one permutation satisfies the corresponding five inequalities.
\end{proposition}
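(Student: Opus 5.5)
Since every point of $\Sp^{\pm}_{ijk}$ must have $a>0$ (for instance $a>\lam{i}{AB}b\ge 0$ in the incentive case, and $b<a/\lam{j}{AB}$ forces $a>0$ in the disincentive case), I will work in the ratio coordinates $x=b/a$, $y=c/a$. The map $(a,b,c)\mapsto(x,y)$ is a bijection from $T\cap\{a>0\}$ onto the closed quadrant $\{x\ge0,y\ge0\}$, with inverse $(x,y)\mapsto(1,x,y)/(1+x+y)$. Each defining inequality is homogeneous of degree one, so it becomes a linear inequality in $(x,y)$. Hence $\Sp^{\pm}_{ijk}\neq\emptyset$ exactly when a certain open polygon in the quadrant is nonempty.

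\textbf{Incentive case.} The six inequalities translate as
\[
x<1/\lam{i}{AB},\quad y<1/\lam{i}{AC},\quad x>1/\lam{j}{AB},\quad x>\lam{j}{BC}y,\quad y>1/\lam{k}{AC},\quad y>x/\lam{k}{BC}.
\]
Positivity of $x$ and $y$ is then automatic. The first and third inequalities form an interval condition on $x$, the second and fifth an interval condition on $y$, and the remaining two say that the point lies in the open wedge
\[
\frac{x}{\lam{k}{BC}}<y<\frac{x}{\lam{j}{BC}}.
\]

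I will argue both directions. For necessity, the wedge is nonempty only if $\lam{k}{BC}>\lam{j}{BC}$, and the two intervals give $\lam{j}{AB}>\lam{i}{AB}$ and $\lam{k}{AC}>\lam{i}{AC}$. For the two cross conditions, combine $y<x/\lam{j}{BC}<1/(\lam{i}{AB}\lam{j}{BC})$ with $y>1/\lam{k}{AC}$, which yields $\lam{k}{AC}>\lam{i}{AB}\lam{j}{BC}$. Symmetrically, $1/\lam{i}{AC}>y>x/\lam{k}{BC}>1/(\lam{j}{AB}\lam{k}{BC})$ yields the last inequality.

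For sufficiency, the region is the product of two open intervals $I_x\times I_y$ intersected with an open wedge $W$ through the origin. Both sets are convex, so I will show that the rectangle meets the wedge. It fails to do so only if the rectangle lies entirely on one side of $W$. That means either its lowest-right corner ratio exceeds the upper slope, or its highest-left corner falls below the lower slope. Concretely, the rectangle meets $W$ if and only if the slope interval $(1/\lam{k}{BC},1/\lam{j}{BC})$ meets the interval of ratios $y/x$ attained on the rectangle, which is $(\inf y/\sup x,\ \sup y/\inf x)$. Writing this out gives exactly
\[
\frac{\lam{i}{AB}}{\lam{k}{AC}}<\frac{1}{\lam{j}{BC}}
\quad\text{and}\quad
\frac{1}{\lam{k}{BC}}<\frac{\lam{j}{AB}}{\lam{i}{AC}},
\]
which are the fourth and fifth inequalities. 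Two open intervals intersect iff each lower end is below the other's upper end, and the remaining comparison $1/\lam{k}{BC}<1/\lam{j}{BC}$ is the third inequality.

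\textbf{Disincentive case.} Every inequality is reversed, giving
\[
1/\lam{i}{AB}<x<1/\lam{j}{AB},\qquad 1/\lam{i}{AC}<y<1/\lam{k}{AC},\qquad \frac{x}{\lam{j}{BC}}<y<\frac{x}{\lam{k}{BC}}.
\]
Running the same computation with the roles of the indices interchanged produces \eqref{eq:dis5}.

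The final sentence of the statement follows immediately, since $\Sp^{\pm}$ is the union of the six components.

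The only delicate point I expect is the sufficiency step. There I must justify carefully that the set of ratios $y/x$ over the open rectangle is precisely the open interval $(\inf y/\sup x,\ \sup y/\inf x)$. I will also need to check that the nonempty intervals together with a slope lying in that range produce an actual point; for this I will pick a slope strictly inside both ranges and then a point on that ray inside the rectangle. Consistency \eqref{eq:consistency} is not needed for this proposition, although it is used later.
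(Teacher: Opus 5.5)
Your proof is correct and is essentially the paper's argument. You use the same ratio coordinates $x=b/a$, $y=c/a$, the same reduction to an $x$-interval, a $y$-interval and a wedge, and the same inequality chains for necessity. The only variation is in sufficiency: the paper eliminates $y$ by showing that the feasible $x$-range $(L,U)$, with $L=\max\{1/\lam{j}{AB},\lam{j}{BC}/\lam{k}{AC}\}$ and $U=\min\{1/\lam{i}{AB},\lam{k}{BC}/\lam{i}{AC}\}$, is nonempty and then picks $y$. You instead intersect the wedge's slope interval with the open range of $y/x$ over the rectangle, which is an equally valid one-step elimination.
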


\begin{proof}
Every point of either solution component has $a,b,c>0$. Set $x=b/a$, $y=c/a$. Moreover, each pair $(x,y)\in\R^2_{>0}$ determines a unique point of $\operatorname{int}(T)$ through
\begin{equation}\label{eq:xy}
(a,b,c)=\frac{1}{1+x+y}(1,x,y).
\end{equation}

\emph{Incentive model.} The six inequalities defining $\Sp^{+}_{ijk}$ are equivalent to
\begin{equation}\label{eq:incxy}
\frac{1}{\lam{j}{AB}}<x<\frac{1}{\lam{i}{AB}},\qquad \max\left\{\frac{1}{\lam{k}{AC}},\frac{x}{\lam{k}{BC}}\right\}<y<\min\left\{\frac{1}{\lam{i}{AC}},\frac{x}{\lam{j}{BC}}\right\}.
\end{equation}
Assume first that $\Sp^{+}_{ijk}\neq\emptyset$. The interval for $x$ gives $\lam{j}{AB}>\lam{i}{AB}$. Comparing the constant bounds for $y$ gives $\lam{k}{AC}>\lam{i}{AC}$, while comparing the bounds proportional to $x$ gives $\lam{k}{BC}>\lam{j}{BC}$. Moreover,
\[
\frac{1}{\lam{k}{AC}}<\frac{x}{\lam{j}{BC}}<\frac{1}{\lam{i}{AB}\lam{j}{BC}},
\]
which implies $\lam{k}{AC}>\lam{i}{AB}\lam{j}{BC}$. Likewise,
\[
\frac{1}{\lam{j}{AB}\lam{k}{BC}}<\frac{x}{\lam{k}{BC}}<\frac{1}{\lam{i}{AC}},
\]
which implies $\lam{i}{AC}<\lam{j}{AB}\lam{k}{BC}$. Hence the five inequalities in \eqref{eq:inc5} are necessary.

Conversely, assume \eqref{eq:inc5}. Define
\[
L=\max\left\{\frac{1}{\lam{j}{AB}},\frac{\lam{j}{BC}}{\lam{k}{AC}}\right\},\qquad U=\min\left\{\frac{1}{\lam{i}{AB}},\frac{\lam{k}{BC}}{\lam{i}{AC}}\right\}.
\]
The five inequalities in \eqref{eq:inc5} imply $L<U$. Indeed, the four comparisons between the two lower and the two upper endpoints follow from $\lam{j}{AB}>\lam{i}{AB}$, $\lam{i}{AC}<\lam{j}{AB}\lam{k}{BC}$, $\lam{k}{AC}>\lam{i}{AB}\lam{j}{BC}$, and from the pair $\lam{k}{AC}>\lam{i}{AC}$, $\lam{k}{BC}>\lam{j}{BC}$. Choose $x\in(L,U)$. Then
\[
\max\left\{\frac{1}{\lam{k}{AC}},\frac{x}{\lam{k}{BC}}\right\}<\min\left\{\frac{1}{\lam{i}{AC}},\frac{x}{\lam{j}{BC}}\right\},
\]
so we may choose a positive $y$ strictly between these bounds. Equation \eqref{eq:xy} then gives a point of $\Sp^{+}_{ijk}$.

\emph{Disincentive model.} The six inequalities defining $\Sp^{-}_{ijk}$ are equivalent to
\begin{equation}\label{eq:disxy}
\frac{1}{\lam{i}{AB}}<x<\frac{1}{\lam{j}{AB}},\qquad \max\left\{\frac{1}{\lam{i}{AC}},\frac{x}{\lam{j}{BC}}\right\}<y<\min\left\{\frac{1}{\lam{k}{AC}},\frac{x}{\lam{k}{BC}}\right\}.
\end{equation}

If $\mathcal S^-_{ijk}\neq\emptyset$, the interval for $x$ gives $\lambda^j_{AB}<\lambda^i_{AB}$; comparing the constant bounds for $y$ gives $\lambda^k_{AC}<\lambda^i_{AC}$, and comparing the bounds proportional to $x$ gives $\lambda^k_{BC}<\lambda^j_{BC}$. Moreover, $1/\lambda^i_{AC}<x/\lambda^k_{BC}<1/(\lambda^j_{AB}\lambda^k_{BC})$ gives $\lambda^i_{AC}>\lambda^j_{AB}\lambda^k_{BC}$, and $1/(\lambda^i_{AB}\lambda^j_{BC})<x/\lambda^j_{BC}<1/\lambda^k_{AC}$ gives $\lambda^k_{AC}<\lambda^i_{AB}\lambda^j_{BC}$. Conversely, assume \eqref{eq:dis5} and set $L'=\max\{1/\lambda^i_{AB},\,\lambda^k_{BC}/\lambda^i_{AC}\}$, $U'=\min\{1/\lambda^j_{AB},\,\lambda^j_{BC}/\lambda^k_{AC}\}$. The inequalities \eqref{eq:dis5} give $L'<U'$, and for $x\in(L',U')$ we can choose $y$ strictly between the bounds in \eqref{eq:disxy}. Normalization through \eqref{eq:xy} completes the proof.

\end{proof}

\subsection{Assignment-score characterization}
The threshold inequalities admit a compact interpretation in terms of the consistent priority vectors. Let $\Pi$ be the six bijections from the agents to $\{A,B,C\}$ and define
\begin{equation}\label{eq:Phi}
\Phi(\pi)=\prod_{r=1}^{3}w^r_{\pi(r)},\qquad \pi\in\Pi.
\end{equation}
Although each $w^r$ is defined only up to a positive factor, the ordering of the six values $\Phi(\pi)$ is well defined: rescaling $w^r$ multiplies every assignment score by the same factor.

\begin{proposition}\label{prop:Phi}
Let $\pi_{ijk}$ denote the assignment $i\mapsto A$, $j\mapsto B$, $k\mapsto C$. Then
\begin{align}
\Sp^{+}_{ijk}\neq\emptyset&\iff \Phi(\pi_{ijk})<\Phi(\sigma)\ \text{for every }\sigma\neq\pi_{ijk},\label{eq:Phimin}\\
\Sp^{-}_{ijk}\neq\emptyset&\iff \Phi(\pi_{ijk})>\Phi(\sigma)\ \text{for every }\sigma\neq\pi_{ijk}.\label{eq:Phimax}
\end{align}
Thus a feasible incentive assignment is exactly the unique minimizer of $\Phi$, while a feasible disincentive assignment is exactly the unique maximizer.
\end{proposition}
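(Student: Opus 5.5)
Rewrite the five inequalities of Proposition~\ref{prop:exist} in terms of the priority vectors through \eqref{eq:ratios}, and match each one with a comparison $\Phi(\pi_{ijk})$ versus $\Phi(\sigma)$ for one of the five $\sigma\neq\pi_{ijk}$. Because every $\lam{r}{XY}$ is a ratio $w^r_X/w^r_Y$, each inequality in \eqref{eq:inc5} becomes, after clearing denominators, an inequality between two products of three $w$-entries, one taken from each agent. The claim is that these five products are exactly the five assignment scores different from $\Phi(\pi_{ijk})=w^i_Aw^j_Bw^k_C$.

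For the incentive model I will carry out the translation explicitly.
\begin{itemize}
\item $\lam{j}{AB}>\lam{i}{AB}$ reads $w^j_A/w^j_B>w^i_A/w^i_B$, i.e.\ $w^i_Bw^j_A>w^i_Aw^j_B$. Multiplying by $w^k_C$ gives $\Phi(\pi_{ijk})<\Phi(\sigma)$ for the transposition $\sigma$ swapping the alternatives of $i$ and $j$ (so $i\mapsto B$, $j\mapsto A$, $k\mapsto C$).
\item $\lam{k}{AC}>\lam{i}{AC}$, multiplied by $w^j_B$, gives the comparison with the transposition swapping the alternatives of $i$ and $k$.
\item $\lam{k}{BC}>\lam{j}{BC}$, multiplied by $w^i_A$, gives the comparison with the transposition swapping the alternatives of $j$ and $k$.
\item $\lam{k}{AC}>\lam{i}{AB}\lam{j}{BC}$ reads $w^k_A/w^k_C>w^i_Aw^j_B/(w^i_Bw^j_C)$, i.e.\ $w^i_Bw^j_Cw^k_A>w^i_Aw^j_Bw^k_C$. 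This is the comparison with the 3-cycle $i\mapsto B$, $j\mapsto C$, $k\mapsto A$.
\item $\lam{i}{AC}<\lam{j}{AB}\lam{k}{BC}$ reads $w^i_A/w^i_C<w^j_Aw^k_B/(w^j_Bw^k_C)$, i.e.\ $w^i_Cw^j_Aw^k_B>w^i_Aw^j_Bw^k_C$. This is the comparison with the other 3-cycle.
\end{itemize}
The three transpositions and two 3-cycles exhaust the five $\sigma\neq\pi_{ijk}$. Hence \eqref{eq:inc5} is equivalent to $\Phi(\pi_{ijk})<\Phi(\sigma)$ for all such $\sigma$, and Proposition~\ref{prop:exist} yields \eqref{eq:Phimin}.

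The disincentive case is identical with every inequality reversed, since \eqref{eq:dis5} is \eqref{eq:inc5} with strict inequalities flipped. This gives \eqref{eq:Phimax}.

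The final sentence of the statement follows because a strict unique minimizer, or maximizer, is unique. There is no real obstacle here. The only care needed is the bookkeeping that each of the five inequalities is paired with a distinct $\sigma$, and that the manipulations preserve direction, which holds because all $w^r_X>0$. The scale invariance of $\Phi$ noted before the statement guarantees that the translation does not depend on the chosen normalization of the $w^r$.
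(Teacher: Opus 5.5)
Your proposal is correct and follows the paper's proof: you translate each of the five inequalities of Proposition~\ref{prop:exist} via \eqref{eq:ratios} into a comparison of $\Phi(\pi_{ijk})$ with one of the three transpositions or the two 3-cycles, invoke Proposition~\ref{prop:exist}, and reverse every inequality for the disincentive model. You write out all five pairings explicitly, whereas the paper writes only two and leaves the rest as analogous, and each of your pairings is correct.
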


\begin{proof}
Compare $\Phi(\pi_{ijk})$ with the assignments obtained by the three transpositions. Using \eqref{eq:ratios}, these three comparisons are equivalent to the first three inequalities in \eqref{eq:inc5}.
For instance, if $\sigma$ exchanges the alternatives of $i$ and $k$ ($i\mapsto C$, $j\mapsto B$, $k\mapsto A$), then

\[
\Phi(\pi_{ijk})<\Phi(\sigma)\iff w^i_Aw^j_Bw^k_C<w^i_Cw^j_Bw^k_A\iff \frac{w^i_A}{w^i_C}<\frac{w^k_A}{w^k_C}\iff \lam{i}{AC}<\lam{k}{AC}.
\]

Comparing $\Phi(\pi_{ijk})$ with the two 3-cycles gives the last two inequalities. For instance, the 3-cycle $\tau$: $i\mapsto B$, $j\mapsto C$, $k\mapsto A$ gives

\[
\Phi(\pi_{ijk})<\Phi(\tau)\iff w^i_Aw^j_Bw^k_C<w^i_Bw^j_Cw^k_A\iff \frac{w^i_A}{w^i_B}\cdot\frac{w^j_B}{w^j_C}<\frac{w^k_A}{w^k_C}\iff \lam{i}{AB}\lam{j}{BC}<\lam{k}{AC}.
\]

Proposition~\ref{prop:exist} therefore yields \eqref{eq:Phimin}. Reversing all five comparisons gives \eqref{eq:Phimax} for the disincentive model.
\end{proof}

\subsection{Interior characterization of the solution region}
The next proposition links the combinatorial requirement that the three agents select different alternatives to a simple geometric property of the induced cell.

\begin{proposition}\label{prop:interior}
A preference region $R$ is contained in the solution region if and only if $R$ is an interior region. Equivalently, the interior preference regions are precisely the nonempty solution components $\Sp^{\pm}_{ijk}$.
\end{proposition}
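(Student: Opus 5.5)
The plan is to prove the two implications separately, working in the incentive model and indicating the changes for the disincentive model. I will use two facts throughout. First, a preference region $R$ has a constant strict profile $(X_1,X_2,X_3)$. Second, $R$ lies in the solution region exactly when $X_1,X_2,X_3$ are pairwise distinct, that is, when $R\subseteq \Sp^{\pm}_{ijk}$ for the permutation with $X_i=A$, $X_j=B$, $X_k=C$.

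\emph{Solution regions are interior.} Suppose $R\subseteq\Sp^{+}_{ijk}$. Then every point of $\overline{R}$ satisfies the weak inequalities of $\Ex^{+}_{ijk}$, because the strict defining inequalities pass to weak ones under closure. I will show that these weak inequalities force $a,b,c>0$.
\begin{itemize}
\item If $a=0$, then $a\ge\lam{i}{AB}b$ and $a\ge\lam{i}{AC}c$ give $b=c=0$, which contradicts $a+b+c=1$.
\item If $b=0$, then $b\ge a/\lam{j}{AB}$ gives $a=0$, and we are back in the previous case.
\item If $c=0$, then $c\ge a/\lam{k}{AC}$ gives $a=0$, again the first case.
\end{itemize}
The disincentive model is handled the same way with the reversed inequalities.
\begin{itemize}
\item If $b=0$, then $a\le\lam{i}{AB}b$ forces $a=0$, so $c=1$; but $c\le a/\lam{k}{AC}=0$, a contradiction.
\item If $a=0$, then $b\le a/\lam{j}{AB}$ forces $b=0$, and we are in the previous case.
\item If $c=0$, then $a\le \lam{i}{AC}c$ and $b\le\lam{j}{BC}c$ force $a=b=0$, a contradiction.
\end{itemize}
Hence $\overline{R}\subset\operatorname{int}(T)$.

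\emph{Interior regions are solution regions.} I argue by contrapositive. Suppose the profile of $R$ is not a bijection. Then some alternative, say $C$ by symmetry of the argument, is preferred by no agent. Fix $p=(a,b,c)\in R$ and move along a path that keeps the ratio $a/b$ fixed.
\begin{itemize}
\item In the incentive model, take the segment from $p$ to $q=(a,b,0)/(a+b)$ on the edge $c=0$. Every point of the open part of this segment has the form $\big(a(1+t),b(1+t),c-t(a+b)\big)$.
\item In the disincentive model, take the segment from $p$ to the vertex $(0,0,1)$, with points $\big(a(1-t),b(1-t),c+t(a+b)\big)$.
\end{itemize}
Along either segment the comparisons $s^r_A$ versus $s^r_B$ are unchanged for every agent $r$, because they depend only on $a/b$. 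Meanwhile $s^r_C$ moves monotonically in the direction that makes $C$ strictly less attractive relative to both $A$ and $B$. Consequently every agent keeps her strict preferred alternative, which is $A$ or $B$, along the whole half-open segment. That segment is therefore a connected set with the same profile containing $p$, so by maximality it lies in $R$. Its endpoint lies in $\partial T$, so $\overline{R}\cap\partial T\neq\emptyset$ and $R$ is a boundary region, not an interior one.

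For the equivalent formulation, each nonempty $\Sp^{\pm}_{ijk}$ is defined by finitely many strict linear inequalities, so it is convex and hence connected, and its profile is constant. It is also maximal: any connected set with the same profile satisfies the same inequalities and so lies in $\Sp^{\pm}_{ijk}$. Thus each nonempty $\Sp^{\pm}_{ijk}$ is exactly one preference region, and by the first part it is interior. Conversely, the second part shows that every interior region is contained in, and hence equal to, some $\Sp^{\pm}_{ijk}$.

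The main obstacle is the second step: choosing the path so that the profile provably stays constant while reaching $\partial T$. Fixing $a/b$ is what makes this work. It freezes all $A$/$B$ comparisons and moves only the unchosen alternative $C$, in the direction that worsens it.
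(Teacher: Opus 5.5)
Your proof is correct and follows the paper's argument. The contrapositive keeps $a/b$ fixed along the same segments as the paper: to the edge $c=0$ in the incentive model and to $(0,0,1)$ in the disincentive model. For the other direction you phrase the edge exclusion through the weak inequalities of $\Ex^{\pm}_{ijk}$ where the paper uses sequences, and your explicit maximality argument, which identifies each nonempty $\Sp^{\pm}_{ijk}$ with exactly one preference region, supplies a step the paper leaves implicit.
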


\begin{proof}
Suppose first that $R$ is not contained in the solution region. Since there are three agents and three alternatives, at least one alternative is absent from the strict preference profile of $R$. Without loss of generality, suppose that no agent prefers $C$.

\emph{Incentive model.} Take $(a,b,c)\in R$ and, for $t\in(0,1]$, define
\[
p_t=\frac{(a,b,tc)}{a+b+tc}.
\]
Along this path the ratio $a/b$ is unchanged, while $C$ becomes progressively less attractive. Hence every strict comparison between $A$ and $B$ is preserved and every strict preference of $A$ or $B$ over $C$ remains valid. Therefore $p_t\in R$ for all $t>0$. As $t\downarrow0$,
\[
p_t\longrightarrow\left(\frac{a}{a+b},\frac{b}{a+b},0\right)\in\partial T.
\]
Thus $R$ is a boundary region.

\emph{Disincentive model.} Again every agent prefers either $A$ or $B$. Define
\[
q_t=(ta,tb,1-t(a+b)),\qquad t\in(0,1].
\]
The ratio $a/b$ is preserved, while $A$ and $B$ become smaller relative to $C$. Since smaller normalized scores are preferred in the disincentive model, every strict preference for $A$ or $B$ remains valid. Hence $q_t\in R$ for all $t>0$, and $q_t\to(0,0,1)\in\partial T$ as $t\downarrow0$. Thus $R$ is again a boundary region. The same construction applies cyclically if the missing alternative is $A$ or $B$. Therefore every interior preference region is contained in the solution region.

Conversely, let $R=\Sp^{\pm}_{ijk}$ be a nonempty solution component. We show that its closure does not meet any edge of $T$.

In the incentive model, suppose that a sequence $(a_n,b_n,c_n)\in R$ converges to the edge $c=0$. Since agent $k$ strictly prefers $C$, $a_n<\lam{k}{AC}c_n$ and $b_n<\lam{k}{BC}c_n$. Thus $a_n,b_n\to0$ whenever $c_n\to0$, contradicting $a_n+b_n+c_n=1$. The edges $a=0$ and $b=0$ are ruled out analogously by using the agents who prefer $A$ and $B$.

In the disincentive model, if $c_n\to0$, the agents who prefer $A$ and $B$ satisfy $a_n<\lam{i}{AC}c_n$ and $b_n<\lam{j}{BC}c_n$, so again $a_n,b_n\to0$, a contradiction. The remaining edges are excluded cyclically. Hence $\overline{R}\subset\operatorname{int}(T)$ in both models, and $R$ is an interior region.
\end{proof}

\subsection{Uniqueness of the solution region}
The score characterization in \eqref{eq:Phimin}--\eqref{eq:Phimax} removes the need for a case-by-case analysis of the six possible assignments.

\begin{proposition}\label{prop:unique}
For each model, there is at most one permutation $(i,j,k)$ of $\{1,2,3\}$ for which the corresponding solution component $\Sp^{\pm}_{ijk}$ is nonempty. Consequently, the solution region is either empty or consists of a single interior preference region.
\end{proposition}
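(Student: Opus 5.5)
The plan is to deduce the statement directly from Proposition~\ref{prop:Phi}, with no further geometry. Fix a model, say the incentive one, and suppose two distinct permutations $(i,j,k)$ and $(i',j',k')$ both give nonempty components $\Sp^{+}_{ijk}$ and $\Sp^{+}_{i'j'k'}$. By \eqref{eq:Phimin}, $\pi_{ijk}$ is then a strict minimizer of $\Phi$ over $\Pi$, so $\Phi(\pi_{ijk})<\Phi(\pi_{i'j'k'})$. The same proposition applied to the second permutation gives $\Phi(\pi_{i'j'k'})<\Phi(\pi_{ijk})$. These two inequalities contradict each other.

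The only point needing care is that the map $(i,j,k)\mapsto\pi_{ijk}$ from permutations to bijections in $\Pi$ is injective. Distinct permutations send some agent to different alternatives, so the two assignments really are distinct elements of $\Pi$ and the strict inequalities apply. We must also check that comparing $\Phi$ values is legitimate despite the scaling ambiguity of the $w^r$. This was already observed after \eqref{eq:Phi}: rescaling any $w^r$ multiplies all six scores by the same positive factor. The disincentive case is identical, using \eqref{eq:Phimax} and strict maximizers instead.

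For the second sentence, recall that $\Sp^{\pm}$ is the union of the six components $\Sp^{\pm}_{ijk}$. At most one of these is nonempty, so $\Sp^{\pm}$ is either empty or equal to a single component $\Sp^{\pm}_{ijk}$. That component is defined by a constant strict preference profile given by finitely many strict linear inequalities, so it is convex and in particular connected. Moreover, no point of $T$ outside it has the same profile $(i\mapsto A, j\mapsto B, k\mapsto C)$, since such a point would satisfy the defining inequalities. Hence $\Sp^{\pm}_{ijk}$ is exactly one preference region. By Proposition~\ref{prop:interior} it is an interior region.

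I do not expect a real obstacle, since the content lies in Propositions~\ref{prop:exist} and~\ref{prop:Phi}. The one step deserving a sentence is the identification of the nonempty component with a single preference region, that is, maximality and connectedness, which follows from convexity as above.
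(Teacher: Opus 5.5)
Your proposal is correct and follows the paper's proof. Uniqueness comes from the strict minimizer/maximizer characterization in Proposition~\ref{prop:Phi}, since two distinct nonempty components would give two distinct strict global optima of $\Phi$, and the second sentence comes from Proposition~\ref{prop:interior}; your extra check that the single nonempty $\Sp^{\pm}_{ijk}$ is exactly one preference region (convex, hence connected, and containing every point with that profile) just makes explicit what the paper takes from Proposition~\ref{prop:interior}.
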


\begin{proof}
In the incentive model, if $\Sp^{+}_{ijk}$ is nonempty, then \eqref{eq:Phimin} shows that $\pi_{ijk}$ is the unique minimizer of $\Phi$ over the six assignments. Two distinct solution components would therefore require two distinct strict global minimizers, which is impossible. In the disincentive model, \eqref{eq:Phimax} shows instead that a feasible assignment is the unique maximizer of $\Phi$. Again, two distinct solution components are impossible. The final statement follows from Proposition~\ref{prop:interior}.
\end{proof}

\begin{theorem}\label{thm:main}
Assume that the three agents are represented by multiplicatively consistent reciprocal preference matrices. For every assignment $\pi\in\Pi$,
\[
\Sp^{+}_{\pi}\neq\emptyset\iff\pi\text{ is the unique minimizer of }\Phi,
\qquad
\Sp^{-}_{\pi}\neq\emptyset\iff\pi\text{ is the unique maximizer of }\Phi.
\]
Consequently, in either model the strict solution region is either empty or consists of a unique convex preference region whose closure is contained in $\operatorname{int}(T)$.
\end{theorem}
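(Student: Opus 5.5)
The theorem is essentially an assembly of the results already established in this section, so the plan is to combine them rather than to argue afresh. The first assertion is Proposition~\ref{prop:Phi} rewritten in the notation $\Sp^{\pm}_\pi$. Every $\pi\in\Pi$ has the form $\pi_{ijk}$ for exactly one permutation $(i,j,k)$ of $(1,2,3)$: take $i=\pi^{-1}(A)$, $j=\pi^{-1}(B)$, $k=\pi^{-1}(C)$. With this choice $\Sp^{\pm}_\pi=\Sp^{\pm}_{ijk}$ by Definition~\ref{def:S}. The condition ``$\Phi(\pi)<\Phi(\sigma)$ for every $\sigma\neq\pi$'' in \eqref{eq:Phimin} is literally the statement that $\pi$ is the unique minimizer of $\Phi$ over the finite set $\Pi$. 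Likewise, \eqref{eq:Phimax} says that $\pi$ is the unique maximizer. I will also note that this ordering is independent of the chosen normalizations of the $w^r$, as remarked after \eqref{eq:Phi}, so the statement is well posed.

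For the consequence, I would argue as follows. The strict solution region $\Sp^{\pm}$ is the union of the six components $\Sp^{\pm}_\pi$. A finite function has at most one unique minimizer, so at most one component is nonempty; this is exactly Proposition~\ref{prop:unique}. Hence $\Sp^{\pm}$ is either empty or equal to a single $\Sp^{\pm}_{ijk}$.

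Convexity of that component follows directly from Definition~\ref{def:S}. It is the intersection of the convex set $T$ with six open half-planes, each given by a linear inequality in $(a,b,c)$, such as $a>\lam{i}{AB}b$. An intersection of convex sets is convex.

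It remains to see that this component is a single preference region whose closure lies in $\operatorname{int}(T)$. On $\Sp^{\pm}_{ijk}$ every agent strictly prefers one alternative, and the profile is constant, equal to $(\pi(1),\pi(2),\pi(3))$. The set is convex, hence connected. Conversely, any point where the profile equals this assignment satisfies the six defining inequalities. So $\Sp^{\pm}_{ijk}$ is a maximal connected set of constant profile, that is, a preference region. Proposition~\ref{prop:interior} then gives $\overline{\Sp^{\pm}_{ijk}}\subset\operatorname{int}(T)$.

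There is no substantial obstacle, since all of the analytic work has been done in Propositions~\ref{prop:exist} and~\ref{prop:Phi}. The only point needing care is the bookkeeping identification of a general bijection $\pi$ with the labelled permutation $(i,j,k)$. Beyond that, one must check that a nonempty solution component coincides with a whole preference region, not merely a subset of one. This holds because the component is defined by exactly the strict inequalities that characterize its profile.
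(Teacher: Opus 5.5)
Your proposal is correct and follows the paper's own proof: the characterization is Proposition~\ref{prop:Phi}, uniqueness is Proposition~\ref{prop:unique}, interiority of the closure is Proposition~\ref{prop:interior}, and convexity comes from the half-space description in Definition~\ref{def:S}. Your two additions make explicit what the paper leaves implicit: identifying $\pi$ with $(i,j,k)$ via $\pi^{-1}$, and checking that the nonempty component is a whole preference region (maximal, not merely a subset of one).
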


\begin{proof}
The minimizer/maximizer characterization is Proposition~\ref{prop:Phi}. Proposition~\ref{prop:unique} gives uniqueness of the nonempty component, and Proposition~\ref{prop:interior} shows that such a component is an interior preference region. Convexity follows from the half-space representation in Definition~\ref{def:S}.
\end{proof}

\section{Degenerate configurations}\label{sec:degenerate}

We now study the extended solution sets $\Ex^{\pm}_{ijk}$ of Definition~\ref{def:E}, which also contain allocations at which some agents are indifferent.

\begin{remark}\label{rem:closure}
If $\Sp^{\pm}_{ijk}$ is nonempty, then $\Ex^{\pm}_{ijk}$ is its closure in $T$. If the strict component is empty, however, the corresponding extended component may still be nonempty and may reduce to a line segment or a single point.
\end{remark}

\begin{proposition}\label{prop:weak}
For every assignment $\pi\in\Pi$,
\[
\Ex^{+}_{\pi}\neq\emptyset\iff\pi\text{ is a minimizer of }\Phi,\qquad \Ex^{-}_{\pi}\neq\emptyset\iff\pi\text{ is a maximizer of }\Phi.
\]
Consequently:
\begin{enumerate}[label=(\roman*)]
\item $\Ex^{+}$ and $\Ex^{-}$ are always nonempty;
\item $\Sp^{+}=\emptyset$ (respectively $\Sp^{-}=\emptyset$) if and only if the minimum (respectively maximum) of $\Phi$ is attained by at least two assignments;
\item if $\Sp^{+}=\emptyset$ (respectively $\Sp^{-}=\emptyset$), then $\Ex^{\pm}$ is the union of the sets $\Ex^{\pm}_{\pi}$ over the optimal assignments $\pi$, and each of them is a segment or a single point.
\end{enumerate}
\end{proposition}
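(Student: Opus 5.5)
The plan is to mirror the proof of Proposition~\ref{prop:exist}, working with weak inequalities. I will first show that every point of $\Ex^{\pm}_{ijk}$ has $a,b,c>0$. In the incentive model, $c=0$ together with $a\le\lam{k}{AC}c$ and $b\le\lam{k}{BC}c$ would force $a=b=0$. The other edges are excluded in the same way, and the disincentive model uses the inequalities of agents $i,j$ exactly as in Proposition~\ref{prop:interior}. Hence the substitution $x=b/a$, $y=c/a$ of \eqref{eq:xy} is available. The system \eqref{eq:incxy} becomes
\[
\frac{1}{\lam{j}{AB}}\le x\le\frac{1}{\lam{i}{AB}},\qquad \max\left\{\frac{1}{\lam{k}{AC}},\frac{x}{\lam{k}{BC}}\right\}\le y\le\min\left\{\frac{1}{\lam{i}{AC}},\frac{x}{\lam{j}{BC}}\right\},
\]
and similarly for \eqref{eq:disxy}. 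The same necessity and sufficiency argument then goes through with every strict inequality replaced by $\le$. With the same $L,U$, feasibility is equivalent to $L\le U$, which is equivalent to the five inequalities of \eqref{eq:inc5} in weak form. By the transposition and 3-cycle computations in the proof of Proposition~\ref{prop:Phi}, these say $\Phi(\pi_{ijk})\le\Phi(\sigma)$ for all $\sigma$, that is, $\pi_{ijk}$ is a minimizer. The disincentive case is symmetric and gives maximizers.

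The consequences then follow quickly. For (i), $\Phi$ attains its minimum and its maximum on the finite set $\Pi$, so some $\Ex^{\pm}_\pi$ is nonempty. For (ii), combine Theorem~\ref{thm:main}: $\Sp^{+}=\emptyset$ holds iff no assignment is the unique minimizer, which holds iff the minimum is attained at least twice. The first part of (iii) is immediate from the characterization, since $\Ex^{\pm}$ is the union of its nonempty components.

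The main obstacle is the dimension claim in (iii): when $\pi$ is optimal but not the unique optimum, $\Ex^{\pm}_\pi$ must be a segment or a point. I would first try the following. Let $\sigma\neq\pi$ be another optimizer. Then at least one of the five weak inequalities for $\pi$ is an equality, namely the one comparing $\pi$ with $\sigma$ if $\sigma\pi^{-1}$ is a transposition or 3-cycle, and every $\sigma\ne\pi$ is of one of these types. I then check that each such equality collapses the feasible $(x,y)$ set to dimension at most one.
\begin{itemize}
\item If $\lam{j}{AB}=\lam{i}{AB}$, then $x$ is fixed.
\item If $\lam{k}{AC}=\lam{i}{AC}$, then $y=1/\lam{i}{AC}$ is fixed.
\item If $\lam{k}{BC}=\lam{j}{BC}$, then $y=x/\lam{j}{BC}$ is forced.
\item If $\lam{k}{AC}=\lam{i}{AB}\lam{j}{BC}$, the constraints $y\ge1/\lam{k}{AC}$, $y\le x/\lam{j}{BC}$ and $x\le1/\lam{i}{AB}$ force $x=1/\lam{i}{AB}$ and $y=1/\lam{k}{AC}$, a single point.
\item The last 3-cycle equality gives, symmetrically, $x=1/\lam{j}{AB}$ and $y=1/\lam{i}{AC}$.
\end{itemize}
In each case the set lies in a line in $(x,y)$ coordinates, and the map \eqref{eq:xy} sends lines through or away from the origin to segments in $T$, since it is a projective map preserving lines. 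The set is also a compact convex polygon, so it is a segment or a point. I would verify the disincentive analogues of these five equality cases with the reversed inequalities.
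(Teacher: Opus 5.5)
Your proposal is correct. For the equivalences and items (i)--(ii) it is the paper's argument. You also check that every point of $\Ex^{\pm}_{ijk}$ has $a,b,c>0$. The paper's instruction to ``repeat the proof of Proposition~\ref{prop:exist} with weak inequalities'' silently needs this, since positivity is no longer automatic once the inequalities are weak.

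The genuine difference is the dimension claim in (iii). The paper argues softly. If $\Ex^{\pm}_\pi$ had nonempty interior, its interior points would satisfy all six inequalities strictly, because each defining function is a nonconstant affine function on the plane $a+b+c=1$. Then $\Sp^{\pm}_\pi\neq\emptyset$, contradicting the hypothesis, and a compact convex planar set with empty interior lies in a line. You instead use the tie $\Phi(\sigma)=\Phi(\pi)$ to turn one of the five weak comparisons into an equality. You then check case by case that this equality confines the $(x,y)$-set to a line or a point, and transport lines back to $T$ through the central projection \eqref{eq:xy}. Your case computations are right, and the disincentive analogues go through in the same way with the reversed bounds. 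The paper's route is shorter and needs neither the case analysis nor the projective argument. Yours is more informative. A tie with a 3-cycle always collapses $\Ex^{\pm}_\pi$ to a single point, the intersection of three agents' indifference lines. A tie with a transposition confines it to an indifference line shared by the two agents involved.

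One small slip: feasibility of the weak system is not equivalent to $L\le U$ alone. You also need the two $x$-independent comparisons $\lam{k}{AC}\ge\lam{i}{AC}$ and $\lam{k}{BC}\ge\lam{j}{BC}$. The condition $L\le U$ encodes only three of the five inequalities plus a consequence of the other two. Your overall chain, feasible $\iff$ the five weak inequalities, is still correct, exactly as in the paper's argument.
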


\begin{proof}
A direct proof of the equivalences repeats the proof of Proposition~\ref{prop:exist} with weak inequalities, which turns \eqref{eq:inc5} into the five weak comparisons expressing $\Phi(\pi)\le\Phi(\sigma)$ for all $\sigma$ (Proposition~\ref{prop:Phi}). Item (i) follows because $\Phi$ attains its minimum and its maximum on the finite set $\Pi$, and item (ii) is Theorem~\ref{thm:main}. For (iii), if $\Ex^{\pm}_{\pi}$ had nonempty interior, its interior points would satisfy the strict inequalities, so $\Sp^{\pm}_{\pi}\neq\emptyset$ and $\pi$ would be the unique optimizer, a contradiction. Thus $\Ex^{\pm}_{\pi}$ is a compact convex subset of $T$ with empty interior, hence contained in a line, hence a segment or a point.
\end{proof}

We now describe the main degenerate configurations; by Proposition~\ref{prop:weak}, each of them corresponds to a tie in the optimum of $\Phi$. They are expressed directly in terms of the threshold parameters because this form is convenient for identifying coincident boundaries and indifference centres.

\textbf{Degeneracies generated by equal pairwise thresholds.} The first three strict inequalities in Proposition~\ref{prop:exist} fail in several important configurations.
\begin{enumerate}[label=(\alph*),leftmargin=1.6em]
\item If one of
\[
\lam{1}{AB}=\lam{2}{AB}=\lam{3}{AB},\qquad \lam{1}{AC}=\lam{2}{AC}=\lam{3}{AC},\qquad \lam{1}{BC}=\lam{2}{BC}=\lam{3}{BC}
\]
holds, all three agents share the same threshold for the corresponding pair of alternatives. If exactly one equality holds, then $\Sp^{\pm}=\emptyset$ and, by Proposition~\ref{prop:weak}(iii), the extended solution set is either a segment or a point; Figure~\ref{fig:deg1} shows segment-valued cases in panels (a) and (d) and point-valued cases in panels (b) and (e). When it is a single point, two agents have the same priority vector, so the configuration is also of type (b). If two of the equalities hold, consistency forces the third one as well. In that case $P^1=P^2=P^3$, and the extended solution set reduces to the common indifference centre; see panels (c) and (f) of the same figure.

\begin{figure}[ht]
\centering
\includegraphics[width=0.78\textwidth]{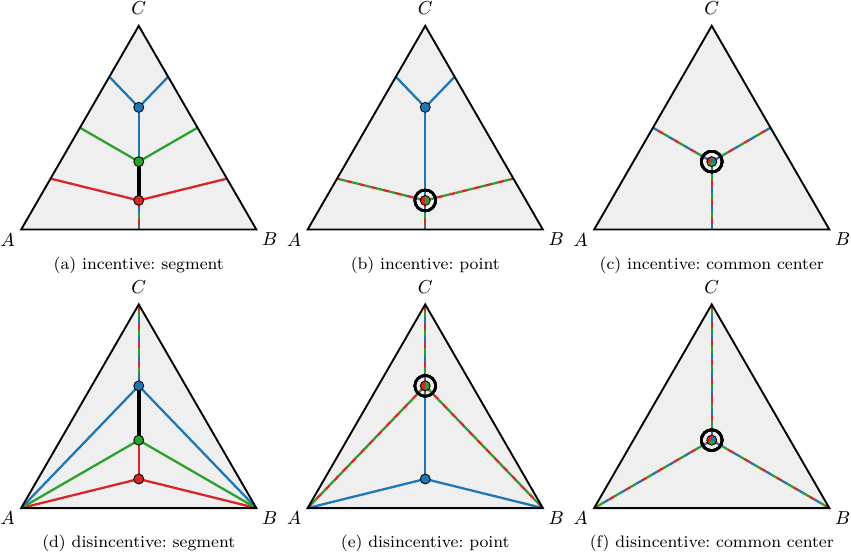}
\caption{Degenerate configurations generated by a pairwise threshold common to all agents ($\lambda_{AB}=1$). Top row: incentive model; bottom row: disincentive model. (a), (d): the extended solution set is a segment. (b), (e): it reduces to a point; two agents then have the same priority vector, so this configuration is also of type (b). (c), (f): all thresholds coincide and the three indifference centres collapse to a single point, which is the extended solution set. The extended solution set is drawn in black (thick segment or circled point); where the boundaries of several agents coincide, their colours alternate along the common piece, and coincident indifference centres are drawn as a disc split into the colours of the agents involved.}
\label{fig:deg1}
\end{figure}

\item If, for two distinct agents $r$ and $s$, $\lam{r}{AB}=\lam{s}{AB}$, $\lam{r}{AC}=\lam{s}{AC}$, $\lam{r}{BC}=\lam{s}{BC}$, then $P^r=P^s$. In the configurations considered here, the extended solution set is either a segment or a point; see Figure~\ref{fig:deg2}, with segment-valued cases in panels (a) and (c) and point-valued cases in panels (b) and (d). In the point-valued examples the third agent also shares one threshold with the other two, so these configurations are also of type (a), and the point is the common centre $P^r=P^s$.

\begin{figure}[ht]
\centering
\includegraphics[width=0.78\textwidth]{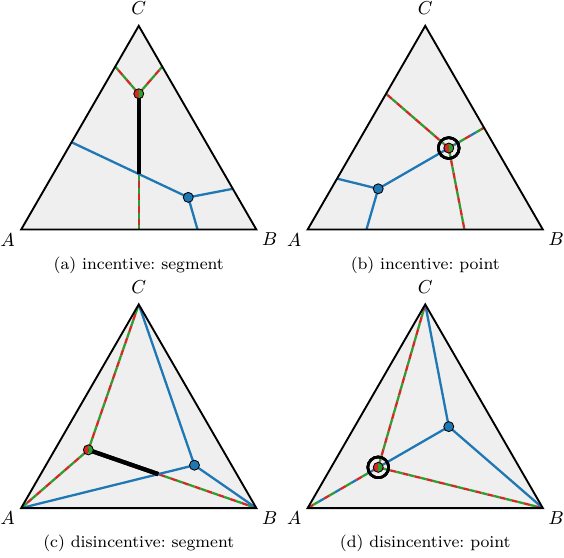}
\caption{Degenerate configurations with two coincident indifference centres (agents 1 and 2 identical). Top row: incentive model; bottom row: disincentive model. Panels (a) and (c) show segment-valued extended solution sets, and panels (b) and (d) point-valued ones.}
\label{fig:deg2}
\end{figure}

\item A further family of failures occurs when two agents share one threshold and the remaining threshold ordering prevents any permutation from satisfying Proposition~\ref{prop:exist}. For the incentive model, the following six configurations are sufficient for $\Sp^{+}=\emptyset$:
\begin{enumerate}[label=(\roman*)]
\item $\lam{r}{AB}=\lam{s}{AB}>\lam{t}{AB}$, $\lam{t}{AC}=\max\{\lam{r}{AC},\lam{s}{AC},\lam{t}{AC}\}$;
\item $\lam{r}{AB}=\lam{s}{AB}<\lam{t}{AB}$, $\lam{t}{BC}=\max\{\lam{r}{BC},\lam{s}{BC},\lam{t}{BC}\}$;
\item $\lam{r}{AC}=\lam{s}{AC}>\lam{t}{AC}$, $\lam{t}{AB}=\max\{\lam{r}{AB},\lam{s}{AB},\lam{t}{AB}\}$;
\item $\lam{r}{AC}=\lam{s}{AC}<\lam{t}{AC}$, $\lam{t}{BC}=\min\{\lam{r}{BC},\lam{s}{BC},\lam{t}{BC}\}$;
\item $\lam{r}{BC}=\lam{s}{BC}>\lam{t}{BC}$, $\lam{t}{AB}=\min\{\lam{r}{AB},\lam{s}{AB},\lam{t}{AB}\}$;
\item $\lam{r}{BC}=\lam{s}{BC}<\lam{t}{BC}$, $\lam{t}{AC}=\min\{\lam{r}{AC},\lam{s}{AC},\lam{t}{AC}\}$,
\end{enumerate}
where $\{r,s,t\}$ is a permutation of $\{1,2,3\}$. For the disincentive model, the corresponding six configurations are obtained by reversing the order relations and interchanging maxima and minima:
\begin{enumerate}[label=(\roman*)]
\item $\lam{r}{AB}=\lam{s}{AB}<\lam{t}{AB}$, $\lam{t}{AC}=\min\{\lam{r}{AC},\lam{s}{AC},\lam{t}{AC}\}$;
\item $\lam{r}{AB}=\lam{s}{AB}>\lam{t}{AB}$, $\lam{t}{BC}=\min\{\lam{r}{BC},\lam{s}{BC},\lam{t}{BC}\}$;
\item $\lam{r}{AC}=\lam{s}{AC}<\lam{t}{AC}$, $\lam{t}{AB}=\min\{\lam{r}{AB},\lam{s}{AB},\lam{t}{AB}\}$;
\item $\lam{r}{AC}=\lam{s}{AC}>\lam{t}{AC}$, $\lam{t}{BC}=\max\{\lam{r}{BC},\lam{s}{BC},\lam{t}{BC}\}$;
\item $\lam{r}{BC}=\lam{s}{BC}<\lam{t}{BC}$, $\lam{t}{AB}=\max\{\lam{r}{AB},\lam{s}{AB},\lam{t}{AB}\}$;
\item $\lam{r}{BC}=\lam{s}{BC}>\lam{t}{BC}$, $\lam{t}{AC}=\max\{\lam{r}{AC},\lam{s}{AC},\lam{t}{AC}\}$,
\end{enumerate}
again with $\{r,s,t\}$ a permutation of $\{1,2,3\}$.

To illustrate why these configurations prevent strict feasibility, consider the first incentive case. Since $\lam{r}{AB}=\lam{s}{AB}>\lam{t}{AB}$, the condition $\lam{j}{AB}>\lam{i}{AB}$ forces $i=t$. But then $\lam{t}{AC}=\max\{\lam{r}{AC},\lam{s}{AC},\lam{t}{AC}\}$ contradicts $\lam{k}{AC}>\lam{i}{AC}$. The remaining cases are checked analogously. Representative point- and segment-valued extended solutions are grouped in Figure~\ref{fig:deg3}, panels (a) and (c) correspond to configuration (i) of the respective model and panels (b) and (d) to configuration (ii).

\end{enumerate}

\begin{figure}[ht]
\centering
\includegraphics[width=0.78\textwidth]{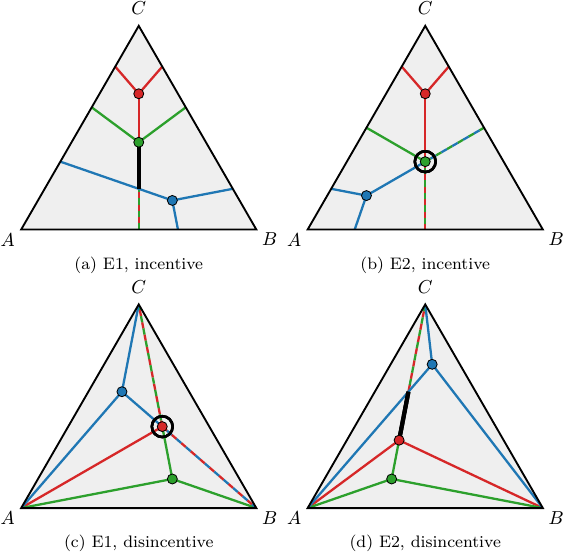}
\caption{Representative threshold-order degeneracies preventing strict feasibility. Top row: incentive model; bottom row: disincentive model. E1 denotes configuration (i) and E2 configuration (ii) of the respective model, with $r,s$ agents 1 and 2 and $t$ agent 3.}
\label{fig:deg3}
\end{figure}

\textbf{Degeneracies generated by cycle equalities.} A different failure of strict feasibility occurs when one of the two cycle inequalities in Proposition~\ref{prop:exist} becomes an equality. In the incentive model, one such configuration is
\begin{equation}\label{eq:cycinc}
\lam{r}{AC}=\lam{s}{AB}\lam{t}{BC},\qquad \lam{t}{AB}>\lam{s}{AB}>\lam{r}{AB},\qquad \lam{s}{AC}<\min\{\lam{r}{AC},\lam{t}{AC}\},
\end{equation}
where $\{r,s,t\}$ is a permutation of $\{1,2,3\}$. In the disincentive model, the corresponding configuration is
\begin{equation}\label{eq:cycdis}
\lam{r}{AC}=\lam{s}{AB}\lam{t}{BC},\qquad \lam{t}{AB}<\lam{s}{AB}<\lam{r}{AB},\qquad \lam{s}{AC}>\max\{\lam{r}{AC},\lam{t}{AC}\}.
\end{equation}
For example, in the incentive model, the equality $\lam{r}{AC}=\lam{s}{AB}\lam{t}{BC}$ rules out the identifications $(r,s,t)=(k,i,j)$ and $(r,s,t)=(i,j,k)$ if both cycle inequalities in Proposition~\ref{prop:exist} are to be strict. The additional order restrictions rule out the remaining permutations. The disincentive argument is analogous. In both models the extended solution set reduces to a single point; the incentive and disincentive cases are shown side by side in Figure~\ref{fig:deg4}.

\begin{figure}[ht]
\centering
\includegraphics[width=0.78\textwidth]{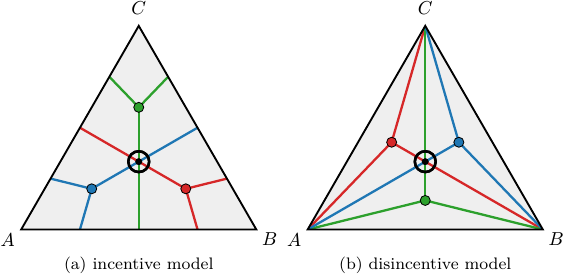}
\caption{Cycle-equality degeneracy. In both models the strict solution component disappears and the extended solution set reduces to a point.}
\label{fig:deg4}
\end{figure}

\section{Comparison with Sperner-based discretization and worked examples}\label{sec:sperner}

The model developed in Section~\ref{sec:model} provides a continuous description of the agents' preferences, and Sections~\ref{sec:main}--\ref{sec:degenerate} identify the exact solution region directly in the simplex $T$. A natural benchmark is the family of discrete procedures based on Sperner labellings. Such procedures replace the continuous simplex by a finite triangulation and query preferences only at its vertices; Sperner's lemma then guarantees the existence of at least one fully labelled elementary triangle whenever the corresponding boundary conditions are satisfied~\cite{Sperner,Su}; see Lemma~\ref{lem:sperner} below. Polytopal and combinatorial extensions are given in~\cite{DeLoeraPetersonSu,Meunier}.

We compare finite discretization procedures; Sperner's lemma itself is not at issue. A fully labelled triangle is a discrete certificate. By contrast, the reciprocal-matrix model yields an explicit continuous region described by inequalities. This distinction affects the interpretation of the output, the role of the triangulation, and the amount of preference information that must be elicited.

\subsection{A Sperner-based discretization of the preference problem}
Let $\T$ be a triangulation of $T$. Assume that every vertex $v$ of $\T$ is assigned to one of the three agents and that the ownership rule is chosen so that the three agents are represented in every elementary triangle. The owner of $v$ is asked which of the alternatives $A$, $B$, or $C$ is preferred at the allocation represented by $v$. The answer is used as the label of $v$; at an indifference point, any of the tied preferred alternatives may be selected.

An elementary triangle is called fully labelled if its three vertices receive the three different labels $A$, $B$, and $C$. A labelling of the vertices of $\T$ by $A,B,C$ is a \emph{Sperner labelling} if the three vertices of $T$ receive three distinct labels and every vertex of $\T$ lying on an edge of $T$ receives one of the two labels assigned to the endpoints of that edge (the \emph{Sperner boundary condition}). Under this condition, Sperner's lemma guarantees the existence of such an elementary triangle:

\begin{lemma}\label{esperner}(Sperner~\cite{Sperner}; see also~\cite{Su})\label{lem:sperner}
Every Sperner labelling of a triangulation of $T$ has an odd number of fully labelled elementary triangles; in particular, it has at least one.
\end{lemma}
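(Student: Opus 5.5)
We prove Sperner's lemma by the classical door-counting (parity) argument. Fix a Sperner labelling of the triangulation $\T$ of $T$. Call an edge of $\T$ (a side of an elementary triangle) a \emph{door} if its two endpoints carry the labels $A$ and $B$. For each elementary triangle $\tau$, let $d(\tau)$ be the number of its sides that are doors. A direct case check on the multiset of labels of $\tau$ shows the following. If $\tau$ is fully labelled, then exactly one side is an $AB$ side, so $d(\tau)=1$. If $\tau$ carries labels $\{A,A,B\}$ or $\{A,B,B\}$, then $d(\tau)=2$. In every other case $d(\tau)=0$. Hence
\[
\sum_{\tau\in\T} d(\tau)\equiv \#\{\text{fully labelled elementary triangles}\}\pmod 2 .
\]

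Next we count the same sum by doors instead of by triangles. Each door interior to $T$ is a side of exactly two elementary triangles, so it contributes $2$ to the sum. Each door lying on $\partial T$ is a side of exactly one elementary triangle, so it contributes $1$. We need the fact that, in a triangulation in the sense used in the paper (any two elementary triangles meet in a common vertex, a common edge, or not at all), every edge is shared by two triangles or lies on the boundary and belongs to one. This is the standard pseudomanifold property of a triangulated disc, which we take as given. Therefore the sum is congruent mod $2$ to the number of boundary doors.

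It remains to show that the number of boundary doors is odd. Here the Sperner boundary condition enters. Label the vertices of $T$ so that the corners receive $A$, $B$, $C$, as the condition requires, and denote the corners by the same letters. Vertices on the edge $AC$ use only the labels $A$ and $C$, and vertices on the edge $BC$ use only $B$ and $C$. Consequently no $AB$ door lies on either of these edges, and every boundary door lies on the edge $AB$. The subdivision of that edge is a path of consecutive segments running from a vertex labelled $A$ to a vertex labelled $B$, and every vertex on it is labelled $A$ or $B$. Walking along the path, the label switches an odd number of times, since it starts at $A$ and ends at $B$. Each switch is exactly one door, so the number of boundary doors is odd.

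Combining the two counts, the number of fully labelled elementary triangles is odd, and in particular it is at least one. The main obstacle is the combinatorial step that each interior edge lies in exactly two elementary triangles. One could justify it by noting that a point near the midpoint of an interior edge has a neighbourhood covered by the triangles, and that triangles meet only along whole edges. We will state it as a standard property of triangulations rather than reprove it.
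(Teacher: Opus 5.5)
Your door-counting parity argument is correct; the paper gives no proof of its own and simply cites Sperner and Su, and your argument is the standard proof found there (Su's ``doors'' argument). Naming the corners by their labels is the right move, since it also covers the derangement corner labelling the paper uses in the disincentive model, and the two facts you take as given (an interior edge lies in exactly two elementary triangles, and the triangulation edges on a side of $T$ form a path between its corners) are standard properties of triangulations.
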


Since the three agents are represented in every elementary triangle, a fully labelled triangle records three distinct local choices made by three different agents. The continuous model developed in Section~\ref{sec:model} can be used to generate these labels; the next subsection shows that the resulting labellings satisfy the Sperner boundary condition.

\subsection{Compatibility with Sperner labellings}
We write $v_A=(1,0,0)$, $v_B=(0,1,0)$, $v_C=(0,0,1)$ for the three vertices of $T$.

\begin{proposition}[Incentive model]\label{prop:spinc}
Consider the incentive model and label each vertex of $\T$ by an alternative preferred by the agent owning that vertex. Then the natural corner labelling $v_A\mapsto A$, $v_B\mapsto B$, $v_C\mapsto C$ satisfies Sperner's boundary condition. Consequently, the induced labelling is a Sperner labelling.
\end{proposition}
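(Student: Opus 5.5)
Two things must be checked: the corner labels are forced to be $A$, $B$, $C$, and every vertex of $\T$ on an edge of $T$ receives one of the two endpoint labels. Both follow from the normalized scores \eqref{eq:scores}, because in the incentive model the owner of a vertex labels it by an alternative of largest score $s^i_X$.

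First the corners. At $v_A=(1,0,0)$ every agent $i$ has $s^i_A=1/w^i_A>0$ and $s^i_B=s^i_C=0$. Hence $A$ is the unique preferred alternative, whoever owns $v_A$, and the label is $A$. The same argument gives labels $B$ at $v_B$ and $C$ at $v_C$. No tie-breaking arises at the corners, so the label convention for indifference points plays no role there.

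Next the edges. Take a vertex $v=(a,b,0)$ on the edge joining $v_A$ and $v_B$, with $a+b=1$. For its owner $i$ we have $s^i_C=0$, while $\max\{s^i_A,s^i_B\}>0$ because $a$ and $b$ are not both zero. Therefore $C$ is never among the maximizers, and every tie-broken label lies in $\{A,B\}$, as required. The edges $\{a=0\}$ and $\{b=0\}$ are handled identically by permuting the coordinates. Geometrically this says that the closure of $P^+_{i,C}$ misses the edge $c=0$, in the spirit of the second half of Proposition~\ref{prop:interior}.

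Combining the two steps shows that the induced labelling is a Sperner labelling, so Lemma~\ref{lem:sperner} applies. The only delicate point is the indifference convention: one must make sure that every admissible choice among tied alternatives still respects the boundary condition. The strict positivity of the maximum score on each edge handles this. That positivity is exactly what fails in the disincentive model, where at $(a,b,0)$ the minimal score is $0$, attained by $C$. This is why the disincentive case needs a separate boundary convention.
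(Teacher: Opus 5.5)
Your proof is correct and follows essentially the same route as the paper. On the edge $c=0$ the score $s^i_C$ vanishes while $\max\{s^i_A,s^i_B\}>0$, so $C$ is never a preferred alternative there, and the other edges follow cyclically; your explicit check that the corner labels are forced to be $A$, $B$, $C$ fills in a step the paper leaves implicit.
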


\begin{proof}
Consider, for example, the relative interior of the edge $c=0$. In the incentive model, the normalized score of alternative $C$ is $s^i_C(a,b,0)=0$. At least one of $a$ and $b$ is positive, so one of the scores $s^i_A$ or $s^i_B$ is strictly positive. Hence $C$ cannot be a preferred alternative at an interior point of this edge. Therefore only the labels $A$ and $B$, which are precisely the labels of the endpoints $v_A$ and $v_B$, can occur on $c=0$. The same argument applies cyclically to the edges $a=0$ and $b=0$.
\end{proof}

For the disincentive model the geometry is dual. On the edge where one coordinate vanishes, the corresponding alternative has the smallest normalized score and is therefore preferred. Thus the natural corner labelling is no longer compatible with the standard Sperner boundary convention. A simple cyclic relabelling resolves this issue.

\begin{proposition}[Disincentive model]\label{prop:spdis}
Consider the disincentive model. Let $\tau$ be a derangement of $\{A,B,C\}$, that is, a permutation with no fixed point. Resolve the ties at the three vertices of $T$ so that $v_A\mapsto\tau(A)$, $v_B\mapsto\tau(B)$, $v_C\mapsto\tau(C)$. At every other vertex of $\T$, use an alternative preferred by the owner of that vertex as its label. Then the resulting labelling satisfies Sperner's boundary condition relative to the corner labels $\tau(A),\tau(B),\tau(C)$.
\end{proposition}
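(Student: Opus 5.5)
The plan is to check the two requirements of a Sperner labelling directly, using only the sign structure of the normalized scores \eqref{eq:scores} on $\partial T$. First, the corner labels must be admissible preferences and must be three distinct labels. Second, every non-corner vertex on an edge of $T$ must carry one of the two labels assigned to that edge's endpoints.

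\textbf{Corners.} At $v_A=(1,0,0)$ every agent has $s^i_A=1/w^i_A>0$ and $s^i_B=s^i_C=0$. In the disincentive model the preferred alternatives at $v_A$ are therefore exactly $B$ and $C$, which are tied. Since $\tau$ is a derangement, $\tau(A)\in\{B,C\}$. So the prescribed label $\tau(A)$ is a legitimate tie resolution of the owner's preference, and the same holds cyclically at $v_B$ and $v_C$. Because $\tau$ is a bijection, the three corner labels $\tau(A),\tau(B),\tau(C)$ are distinct, as the Sperner condition requires.

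\textbf{Edges.} Consider a vertex $v$ of $\T$ in the relative interior of the edge $c=0$, so $a,b>0$. For every agent, $s^i_C(v)=0<\min\{s^i_A(v),s^i_B(v)\}$. Hence $C$ is the unique preferred alternative, and the label of $v$ is forced to be $C$ regardless of its owner. The endpoints of this edge are $v_A,v_B$, with labels $\tau(A),\tau(B)$. Since $\tau(C)\neq C$ and $\tau$ is a bijection, $C$ must equal $\tau(A)$ or $\tau(B)$. Thus the label of $v$ is one of the two endpoint labels. The edges $a=0$ and $b=0$ are handled identically: the forced label is $A$, respectively $B$, and it lies in $\{\tau(B),\tau(C)\}$, respectively $\{\tau(A),\tau(C)\}$, because $\tau(A)\neq A$, respectively $\tau(B)\neq B$.

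I do not expect a genuine obstacle. The only subtle point is noticing that the derangement property is used twice. At the corners it guarantees that the prescribed label is among the tied preferred alternatives. On the edges it is used through the fixed-point-freeness at the \emph{opposite} index, which places the forced label among the endpoint labels. The argument should also remark that interior vertices of $T$ impose no boundary constraint, so no condition is needed there.
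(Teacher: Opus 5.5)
Your proof is correct and follows essentially the same route as the paper's: on each open edge the alternative whose coordinate vanishes is the unique minimizer of every agent's normalized score, and $\tau$ having no fixed point places it among the two endpoint labels, while at each corner (e.g.\ $v_A$) the prescribed label $\tau(A)$ is one of the two tied zero-score alternatives. Your explicit remark that the corner labels are distinct because $\tau$ is a bijection is a small point the paper leaves implicit.
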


\begin{proof}
Consider the relative interior of the edge $a=0$. For every agent $i$, $s^i_A(0,b,c)=0<s^i_B(0,b,c),\,s^i_C(0,b,c)$, so $A$ is the unique preferred alternative in the disincentive model. The edge $a=0$ is spanned by $v_B$ and $v_C$, whose corner labels are $\tau(B)$ and $\tau(C)$. Since $\tau$ is a derangement, $\tau(A)\neq A$; because $\tau$ is a permutation, this implies $A\in\{\tau(B),\tau(C)\}$. Hence the label $A$ is admissible on the edge $a=0$. The same argument applies cyclically to the other two edges.

At a corner, two coordinates are zero. Therefore the disincentive model produces a tie between the two alternatives associated with the zero coordinates. Since $\tau$ has no fixed points, $\tau(A)$ is one of the two alternatives tied at $v_A$, and similarly at $v_B$ and $v_C$. Thus the prescribed corner labels can be obtained by a valid tie-breaking rule.
\end{proof}

\subsection{Exact solution regions and fully labelled triangles}
A fully labelled elementary triangle should not be confused with the exact solution region of Definition~\ref{def:S}. The three labels of an elementary triangle are obtained at three different points of $T$. By contrast, a point of $\Sp^{\pm}_{ijk}$ is a single allocation at which agent $i$ prefers $A$, agent $j$ prefers $B$, and agent $k$ prefers $C$ simultaneously.

Consequently, the existence of a fully labelled elementary triangle does not by itself imply that the triangle intersects the exact continuous solution region. The discrete and continuous objects encode different statements:
\begin{center}
\begin{tabular}{ll}
fully labelled triangle: & three different labels at three sampled allocations,\\
solution region: & three different preferences at the same allocation.
\end{tabular}
\end{center}

Examples~\ref{ex:inc} and~\ref{ex:dis} in Section~\ref{subsec:examples} illustrate this distinction: in both of them the unique fully labelled elementary triangle does not meet the exact solution region, while several elementary triangles that do meet it are not fully labelled.

This observation does not contradict Sperner's lemma. Sperner's lemma guarantees a fully labelled triangle under the stated boundary conditions; additional arguments are required if that triangle is to be interpreted as an exact solution of the underlying continuous allocation problem. The following results supply such an argument in the limit of fine meshes.

\begin{lemma}\label{lemma:mesh} Let $(\mathcal T_n)$ be a sequence of triangulations of $T$ with $\operatorname{mesh}(\mathcal T_n)\to0$, each equipped with an ownership rule under which the three agents are represented in every elementary triangle, and labelled as in Proposition \ref{prop:spinc} (incentive model) or Proposition \ref{prop:spdis} (disincentive model). Let $\tau_n$ be a fully labelled elementary triangle of $\mathcal T_n$ and $p_n\in\tau_n$. Then, every accumulation point of $(p_n)$ belongs to $\mathcal E^{\pm}$.
\end{lemma}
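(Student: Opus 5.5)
The plan is a compactness argument combined with closedness of the weak preference sets. Let $p$ be an accumulation point of $(p_n)$ and pass to a subsequence with $p_n\to p$. Each $\tau_n$ has three vertices $u_n^A,u_n^B,u_n^C$ carrying labels $A,B,C$. Since the agents are represented in every elementary triangle, each vertex is owned by a different agent. This defines a bijection $\pi_n$ from agents to alternatives: the owner of $u_n^X$ is sent to $X$. Because $\Pi$ is finite, after a further subsequence $\pi_n=\pi$ is constant. Since $\operatorname{diam}(\tau_n)\le\operatorname{mesh}(\mathcal T_n)\to0$ and $p_n\in\tau_n$, all three vertices converge to $p$.

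The key observation is that the weak preference set
\[
\overline P^{\pm}_{r,X}=\{q\in T: X \text{ is a (weakly) preferred alternative of agent } r \text{ at } q\}
\]
is closed. It is defined by non-strict linear inequalities, for instance $a\ge\lam{r}{AB}b$ and $a\ge\lam{r}{AC}c$ for $X=A$ in the incentive model, or the reversed ones in the disincentive model. I will check that $\Ex^{\pm}_{ijk}$ is exactly the intersection $\overline P^{\pm}_{i,A}\cap\overline P^{\pm}_{j,B}\cap\overline P^{\pm}_{k,C}$ read off from Definition~\ref{def:E}. Then, if $u_n^X\in\overline P^{\pm}_{\pi^{-1}(X),X}$ for all $n$, closedness passes the condition to the limit $p$ for each $X$, giving $p\in\Ex^{\pm}_{\pi}\subset\Ex^{\pm}$.

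The remaining point, and the main obstacle, is verifying that each label really is a weakly preferred alternative of the vertex owner. At non-corner vertices this holds by construction in both models, including tie-broken vertices. In the incentive model it also holds at the corners, since the corner labelling of Proposition~\ref{prop:spinc} is an actual preference. In the disincentive model, the corner labels $\tau(X)$ of Proposition~\ref{prop:spdis} were shown there to be among the tied preferred alternatives at that corner. So in every case the label is weakly preferred, regardless of who owns the corner.

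One subtlety must be handled: when the limit $p$ lies on $\partial T$, some scores vanish. Because the inequalities are written multiplicatively, with no division by coordinates, they remain valid closed conditions on all of $T$, and the limiting argument goes through unchanged. This completes the plan.
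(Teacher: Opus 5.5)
Your proposal is correct and follows essentially the same route as the paper's proof. Both arguments fix a subsequence along which the agent-to-label bijection is constant, note that every label is weakly preferred by the vertex owner (including the disincentive corner labels given by the derangement), and pass the six weak linear inequalities to the limit as the three vertices converge to $p$. Your remarks on the closedness of the weak preference sets and on the multiplicative form of the inequalities at boundary limits only spell out the paper's ``letting $n\to\infty$'' step in more detail.
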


\begin{proof} Let $p=(a,b,c)$ be an accumulation point of $(p_n)$ and pass to a subsequence with $p_n\to p$. In $\tau_n$ each agent owns exactly one vertex and the three labels are distinct, so $\tau_n$ defines a bijection assigning to each agent the label of its vertex. Since there are only six bijections, after passing to a further subsequence we may assume that this bijection is the same for all $n$, say $i\mapsto A$, $j\mapsto B$, $k\mapsto C$. Let $v^r_n=(a^r_n,b^r_n,c^r_n)$ be the vertex of $\tau_n$ owned by agent $r$. Its label is an alternative weakly preferred by $r$ at $v^r_n$; in the disincentive model this also holds at the vertices of $T$, where the label is one of the two tied alternatives. Hence, in the incentive model,
$$a^i_n\ge\lambda^i_{AB}b^i_n,\quad a^i_n\ge\lambda^i_{AC}c^i_n,\quad b^j_n\ge a^j_n/\lambda^j_{AB},\quad b^j_n\ge\lambda^j_{BC}c^j_n,\quad c^k_n\ge a^k_n/\lambda^k_{AC},\quad c^k_n\ge b^k_n/\lambda^k_{BC},$$

and in the disincentive model the same inequalities hold with $\le$. Since $v^r_n$ and $p_n$ lie in $\tau_n$,
$$\|v^r_n-p\|\le\|v^r_n-p_n\|+\|p_n-p\|\le\operatorname{mesh}(\mathcal T_n)+\|p_n-p\|\longrightarrow0,$$

so $v^r_n\to p$ for $r=i,j,k$. Letting $n\to\infty$ in the six inequalities yields the six weak inequalities defining $\mathcal E^{\pm}_{ijk}$ (Definition \ref{def:E}). Therefore, $p\in\mathcal E^{\pm}_{ijk}\subseteq\mathcal E^{\pm}$.
\end{proof}

\begin{proposition}\label{prop:liminf}
Let $F_n$ denote the union of the fully labelled elementary triangles of
$\mathcal T_n$; it is a nonempty (Lemma \ref{esperner}) closed set. Under the assumptions of Lemma~\ref{lemma:mesh}. 

\begin{enumerate}
\item[(i)] every point that belongs to $F_n$ for infinitely many $n$ lies in
$\mathcal E^\pm$;
\item[(ii)] if $\mathcal S^\pm\neq\emptyset$, then
$\mathcal S^\pm\subseteq\liminf_n F_n:=\bigcup_{m\ge1}\bigcap_{n\ge m}F_n$.
\end{enumerate}
Consequently, if $\mathcal S^\pm\neq\emptyset$, then
$\mathcal S^\pm\subseteq\liminf_n F_n\subseteq\mathcal
E^\pm=\overline{\mathcal S^\pm}$,
and therefore $\overline{\liminf_n F_n}=\mathcal E^\pm$.
\end{proposition}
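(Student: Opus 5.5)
Part (i) follows directly from Lemma~\ref{lemma:mesh}. Let $p$ lie in $F_n$ for infinitely many $n$, say for $n$ in an infinite set $N$. For each $n\in N$ choose a fully labelled triangle $\tau_n\subseteq F_n$ containing $p$ and set $p_n=p$. For $n\notin N$ pick any fully labelled $\tau_n$ (one exists by Lemma~\ref{esperner}) and any $p_n\in\tau_n$. Then $p$ is an accumulation point of $(p_n)$, so Lemma~\ref{lemma:mesh} gives $p\in\mathcal E^\pm$. Closedness of $F_n$ is immediate, since it is a finite union of closed triangles.

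For (ii), assume $\mathcal S^\pm\neq\emptyset$. By Theorem~\ref{thm:main} we have $\mathcal S^\pm=\mathcal S^\pm_{ijk}$ for a single assignment, and this set is open in $T$, being defined by strict inequalities. Fix $p\in\mathcal S^\pm$ and $r>0$ with $B(p,2r)\cap T\subseteq \mathcal S^\pm$. Since $\overline{\mathcal S^\pm}\subset\operatorname{int}(T)$ by Proposition~\ref{prop:interior}, we may also take the ball inside $\operatorname{int}(T)$. Choose $m$ with $\operatorname{mesh}(\mathcal T_n)<r$ for $n\ge m$. For such $n$, let $\tau$ be an elementary triangle containing $p$. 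All three vertices of $\tau$ lie in $B(p,r)\subseteq\mathcal S^\pm_{ijk}$, and none is a corner of $T$, so the corner tie-breaking convention of Proposition~\ref{prop:spdis} is irrelevant. At each vertex every agent has a unique preferred alternative, with agent $i$ preferring $A$, $j$ preferring $B$ and $k$ preferring $C$. Each agent owns exactly one vertex of $\tau$, so the three labels are $A,B,C$ and $\tau$ is fully labelled. Hence $p\in F_n$ for all $n\ge m$, that is, $p\in\liminf_n F_n$.

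The only step requiring care is the ownership hypothesis. We need "the three agents are represented in every elementary triangle" to mean that each agent owns exactly one of the three vertices, which is the reading already used in the proof of Lemma~\ref{lemma:mesh}. Under that reading the labels are pairwise distinct without further argument.

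For the final chain, note that $\liminf_n F_n\subseteq\mathcal E^\pm$ by (i): a point of $\bigcap_{n\ge m}F_n$ belongs to infinitely many $F_n$. Next we need $\mathcal E^\pm=\overline{\mathcal S^\pm}$. Proposition~\ref{prop:weak} shows that only minimizers (respectively maximizers) of $\Phi$ give nonempty $\mathcal E^\pm_\pi$, and here $\pi_{ijk}$ is the unique optimizer. So $\mathcal E^\pm=\mathcal E^\pm_{ijk}$, which by Remark~\ref{rem:closure} equals $\overline{\mathcal S^\pm_{ijk}}$. Finally, taking closures in $\mathcal S^\pm\subseteq\liminf_n F_n\subseteq\mathcal E^\pm$ and using that $\mathcal E^\pm$ is closed gives $\overline{\liminf_n F_n}=\mathcal E^\pm$.
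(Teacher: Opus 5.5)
Your proposal is correct and follows the paper's proof essentially step by step. Part (i) goes through Lemma~\ref{lemma:mesh} with $p_n=p$ (you pad the sequence with arbitrary fully labelled triangles where the paper passes to a subsequence of triangulations), part (ii) uses a ball around $p$ inside $\mathcal S^\pm$ with $\operatorname{mesh}(\mathcal T_n)$ smaller than its radius, and your explicit justifications of $\mathcal E^\pm=\overline{\mathcal S^\pm}$ (via Proposition~\ref{prop:weak} and Remark~\ref{rem:closure}) and of why the small triangles receive the labels $A,B,C$ fill in details the paper leaves implicit.
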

\begin{proof}
(i) Let $p\in F_n$ for infinitely many $n$, and for each such $n$ choose a
fully
labelled elementary triangle $\tau_n\ni p$. Applying Lemma \ref{lemma:mesh} to this
subsequence of triangulations with $p_n=p$ gives $p\in\mathcal E^\pm$.
(ii) Let $p\in\mathcal S^\pm$. Since $\mathcal S^\pm$ is defined by strict inequalities and its closure lies in $\operatorname{int}(T)$ (Proposition \ref{prop:interior}), there is $\varepsilon>0$ with $B(p,\varepsilon)\cap T\subseteq\mathcal S^\pm$. Choose $N$ with $\operatorname{mesh}(\mathcal T_n)<\varepsilon$ for $n\ge N$. Then, for $n\ge N$, every elementary triangle $\tau\ni p$ of $\mathcal T_n$ satisfies $\tau\subseteq B(p,\varepsilon)\cap T\subseteq\mathcal S^\pm$, so $\tau$ is fully labelled as it is included in the solution region. Hence $p\in F_n$ for all $n\ge N$. The last statement follows by taking closures in $\mathcal S^\pm\subseteq\liminf_nF_n\subseteq\overline{\mathcal S^\pm}$.
\end{proof}

\subsection{Worked examples}\label{subsec:examples}
In both examples we use the standard triangular subdivision of $T$ with $m=10$: its vertices are the points $\big((m-i-j)/m,\,i/m,\,j/m\big)$ with $i,j\ge0$, $i+j\le m$, and the vertex with indices $(i,j)$ is owned by agent $1+\big((i+2j)\bmod 3\big)$. Along every elementary triangle the quantity $i+2j$ takes three consecutive values, so the three agents are represented in every elementary triangle. Labels are preferred alternatives of the owner, with ties at interior vertices broken alphabetically; in the disincentive model the corners are labelled by the derangement $A\mapsto B$, $B\mapsto C$, $C\mapsto A$. All quantities below were computed in exact rational arithmetic.

\begin{example}[Incentive model: positions and bonuses]\label{ex:inc}
Three employees are to be assigned to three positions $A$, $B$, $C$, and a bonus budget normalized to $1$ is split as $(a,b,c)$ among the positions. The consistent thresholds $(\lambda_{AB},\lambda_{AC},\lambda_{BC})$ are $(5/2,\,5/6,\,1/3)$ for agent~1, $(5/2,\,5,\,2)$ for agent~2 and $(2,\,2,\,1)$ for agent~3, so that, with $w_C=1$, $w^1=(5/6,1/3,1)$, $w^2=(5,2,1)$, $w^3=(2,1,1)$, and $P^1=(5/13,2/13,6/13)$, $P^2=(5/8,1/4,1/8)$, $P^3=(1/2,1/4,1/4)$. Writing an assignment as the alternatives received by agents 1, 2, 3, the assignment scores are $\Phi(ABC)=5/3$, $\Phi(ACB)=5/6$, $\Phi(BAC)=5/3$, $\Phi(BCA)=2/3$, $\Phi(CAB)=5$ and $\Phi(CBA)=4$. The unique minimizer is $BCA$ (agent 3 $\mapsto A$, agent 1 $\mapsto B$, agent 2 $\mapsto C$), so by Theorem~\ref{thm:main} the strict solution region is $\Sp^{+}_{312}$. It is the quadrilateral with vertices $(10/19,4/19,5/19)$, $(5/8,1/4,1/8)=P^2$, $(4/7,2/7,1/7)$ and $(1/2,1/4,1/4)=P^3$.

The Sperner labelling has a unique fully labelled elementary triangle, with vertices $(3/5,3/10,1/10)$ (owner agent~3, label $A$), $(1/2,2/5,1/10)$ (agent~1, label $B$) and $(1/2,3/10,1/5)$ (agent~2, label $C$). It records exactly the optimal assignment, yet it does not meet the closure of $\Sp^{+}$. Conversely, three elementary triangles meet $\Sp^{+}$ and none of them is fully labelled. See Figure~\ref{fig:exinc}.
\end{example}

\begin{figure}[ht]
\centering
\includegraphics[width=0.86\textwidth]{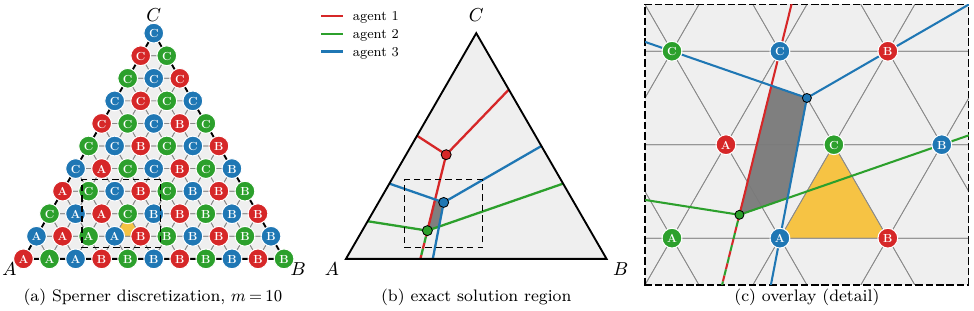}
\caption{Example~\ref{ex:inc} (incentive model). (a) Sperner-based discretization with $m=10$: the colour of each vertex indicates its owner (agents 1, 2, 3 in red, green, blue), the letter its label, and the fully labelled elementary triangle is shaded in yellow. (b) Exact solution region $\Sp^{+}$ (grey) obtained from the six thresholds. (c) Overlay of (a) and (b) in the window marked by the dashed rectangle.}
\label{fig:exinc}
\end{figure}

\begin{example}[Disincentive model: rooms and rent]\label{ex:dis}
Three housemates share a flat with rooms $A$, $B$, $C$ and a total rent normalized to $1$, split as $(a,b,c)$. The consistent thresholds $(\lambda_{AB},\lambda_{AC},\lambda_{BC})$ are $(3/2,\,15/4,\,5/2)$ for agent~1, $(1/3,\,4/9,\,4/3)$ for agent~2 and $(1,\,5/2,\,5/2)$ for agent~3, so that $w^1=(15/4,5/2,1)$, $w^2=(4/9,4/3,1)$, $w^3=(5/2,5/2,1)$, and $P^1=(15/29,10/29,4/29)$, $P^2=(4/25,12/25,9/25)$, $P^3=(5/12,5/12,1/6)$. The assignment scores are $\Phi(ABC)=5$, $\Phi(ACB)=75/8$, $\Phi(BAC)=10/9$, $\Phi(BCA)=25/4$, $\Phi(CAB)=10/9$ and $\Phi(CBA)=10/3$. The unique maximizer is $ACB$ (agent 1 $\mapsto A$, agent 3 $\mapsto B$, agent 2 $\mapsto C$), so the strict solution region is $\Sp^{-}_{132}$, the quadrilateral with vertices $(4/11,4/11,3/11)$, $(6/13,4/13,3/13)$, $(15/29,10/29,4/29)=P^1$ and $(5/12,5/12,1/6)=P^3$.

The Sperner labelling has a unique fully labelled elementary triangle, with vertices $(1/2,3/10,1/5)$ (owner agent~2, label $C$), $(1/2,1/5,3/10)$ (agent~3, label $B$) and $(2/5,3/10,3/10)$ (agent~1, label $A$). Again it records the optimal assignment but does not meet the closure of $\Sp^{-}$, while five elementary triangles meet $\Sp^{-}$ and none of them is fully labelled. See Figure~\ref{fig:exdis}.
\end{example}

\begin{figure}[ht]
\centering
\includegraphics[width=0.86\textwidth]{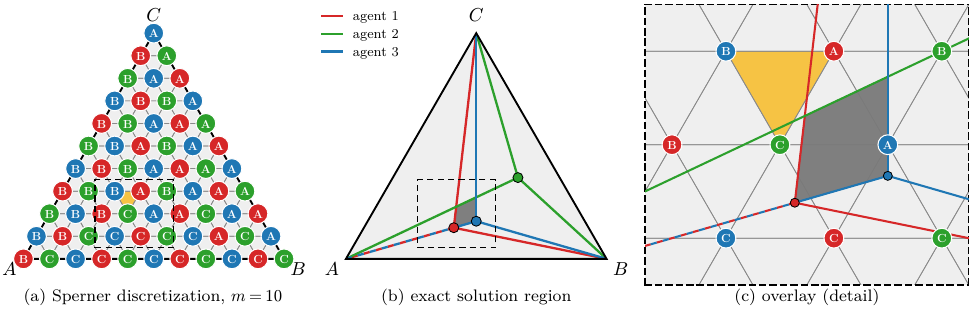}
\caption{Example~\ref{ex:dis} (disincentive model), with the same conventions as in Figure~\ref{fig:exinc}. The corners are labelled by the derangement $A\mapsto B$, $B\mapsto C$, $C\mapsto A$ of Proposition~\ref{prop:spdis}.}
\label{fig:exdis}
\end{figure}

\subsection{Local refinement and dependence on the triangulation}
A common numerical strategy is to start from a relatively coarse triangulation, locate a fully labelled elementary triangle, and then refine only that triangle. This step requires care. A triangle that is fully labelled as part of a global Sperner labelling does not automatically inherit the boundary conditions needed to regard it as a new independent Sperner domain. Its three edges are generally interior edges of the original triangulation, and the labels along them need not be restricted to the labels of their endpoints.

In Example~\ref{ex:inc} (Figure~\ref{fig:exinc}), for example, one of the edges of the selected fully labelled triangle contains points at which an agent strictly prefers the third alternative; the same happens in Example~\ref{ex:dis}. Treating that triangle as a new domain and applying Sperner's lemma locally would therefore require additional boundary assumptions that are not inherited from the original problem.

A second distinction is mesh dependence. Different initial triangulations, or different refinements of the same simplex, may produce different fully labelled elementary triangles. This is natural for a discrete approximation method. By contrast, once the reciprocal thresholds are fixed, the continuous solution region $\Sp^{\pm}$ is determined by the inequalities of Section~\ref{sec:main} and does not depend on a triangulation.

Thus a more precise comparison is not that one method ``succeeds'' while the other ``fails'', but that the two methods return different mathematical objects: Sperner-based discretization localizes candidate solutions through a mesh, whereas the reciprocal-matrix approach characterizes the exact feasible region under its parametric preference assumptions.

\subsection{Preference elicitation and number of queries}
The reciprocal-matrix formulation also changes the amount and type of information elicited from the agents. Under multiplicative consistency, each agent has only two independent thresholds, since $\lam{i}{BC}=\lam{i}{AC}/\lam{i}{AB}$. Thus, if the thresholds are elicited through pairwise comparisons, the three-agent model requires $6$ independent pairwise preference assessments.

For comparison, consider the standard triangular lattice obtained by dividing each side of $T$ into $m$ equal subintervals. The number of triangulation vertices is
\[
N_m=\binom{m+2}{2}=\frac{(m+1)(m+2)}{2}.
\]
If one preference query is made at every non-corner vertex, the corresponding discrete scheme uses at most $N_m-3$ vertex queries, because the labels of the three vertices of $T$ are fixed by the boundary convention (Propositions~\ref{prop:spinc}--\ref{prop:spdis}). This yields 63 in the example with $m=10$, which gives the comparison 6 versus 63 for the mesh of Figures~\ref{fig:exinc} and~\ref{fig:exdis}.

\subsection{Summary of the comparison}
Table~\ref{tab:comparison} summarizes the main differences between the two approaches. The comparison highlights the trade-off underlying the proposed method. The reciprocal-matrix approach gains an exact, mesh-independent description of the solution set and a compact elicitation scheme by imposing a structured parametric model of preferences. Sperner-based discretization is more flexible with respect to preference structure, but returns a mesh-dependent discrete certificate whose accuracy must be interpreted relative to the chosen triangulation.

\begin{table}[ht]
\centering\small
\begin{tabular}{>{\raggedright\arraybackslash}p{2.7cm}>{\raggedright\arraybackslash}p{5.2cm}>{\raggedright\arraybackslash}p{5.2cm}}
\toprule
 & Reciprocal-matrix method & Sperner-based discretization\\
\midrule
Preference information & Two independent pairwise thresholds per agent under consistency & Preferred alternative at sampled triangulation vertices\\
Output & Exact continuous solution region $\Sp^{\pm}$ (and the extended set $\Ex^{\pm}$, which is never empty) & An odd number of fully labelled elementary triangles, converging to $\Ex^{\pm}$ as the mesh is refined\\
Triangulation & Not required & Required\\
Dependence on mesh & None once the thresholds are fixed & Output depends on the chosen triangulation and its resolution\\
Queries in the three-agent model & 6 pairwise assessments (each a real-valued threshold) & At most $\binom{m+2}{2}-3$ vertex queries if every non-corner vertex of the standard $m$-subdivision is queried once\\
Structural assumption & Consistent reciprocal thresholds and linear indifference boundaries & Weaker local/ordinal preference information\\
\bottomrule
\end{tabular}
\caption{Comparison between the continuous reciprocal-matrix approach and a finite Sperner-based discretization. The query counts refer to the specific elicitation schemes described in the text and should not be interpreted as a universal complexity bound for all Sperner-based fair-division algorithms.}
\label{tab:comparison}
\end{table}

\section{Conclusions and future lines}\label{sec:concl}

As recalled in the Introduction, a fair-allocation problem combines two tasks: describing how agents compare the alternatives and locating an allocation at which all these comparisons are satisfied at once. Sperner-based discretizations solve the second task under very weak assumptions on the first, but what they return is a mesh-dependent certificate rather than the fair allocations themselves. In this paper we have explored the opposite trade-off: by describing the comparisons with consistent reciprocal matrices, the set of fair allocations can be computed exactly.

More precisely, we have shown that, when the preferences of three agents over three alternatives with values on the simplex are described by multiplicatively consistent reciprocal matrices, the fair-allocation problem admits an exact continuous solution. Six independent thresholds determine linear indifference boundaries; the assignment score decides which assignment, if any, admits strictly envy-free allocations, and the corresponding region is unique, convex and interior. The weak solution set is never empty and degenerates to a segment or a point exactly when the assignment score has ties. Compared with Sperner-based discretization, the method returns the exact solution set instead of a mesh-dependent certificate, at the price of a stronger parametric assumption on preferences. Fully labelled triangles converge to the extended solution set as the mesh is refined, so the two approaches agree in the limit, while at a fixed resolution a fully labelled triangle need not meet the exact solution region.

Several extensions are natural. First, inconsistent reciprocal matrices no longer derive from a single priority vector; the three indifference segments of an agent are then no longer concurrent, and gaps or overlaps between preference regions appear, which calls for a dedicated analysis in both the incentive and the disincentive models. Second, straight-line separators could be replaced by curves better adapted to specific applications, which would turn the feasibility problem into a nonlinear one. Finally, for $n>3$ agents the geometric description becomes more involved (the solution regions live in an $(n-1)$-simplex and there are $n!$ assignments). In logarithmic coordinates, however, the preferences become quasi-linear and the feasibility question reduces to a linear assignment problem, as in the rent-division literature \cite{AlkanDemangeGale,GalMashProcacciaZick}; developing this extension is left for future work.

\vspace{1cm}
\textbf{Code availability.}
The Python scripts used to generate the numerical results and figures are  available in:
\url{https://github.com/LuisAngelCalvoPascual/continuous_envy-free}.


\end{document}